\newcommand{\CC}{\mathbb{C}}
\newcommand{\RR}{\mathbb{R}}
\newcommand{\QQ}{\mathbb{Q}}
\newcommand{\ZZ}{\mathbb{Z}}
\newcommand{\kk}{\Bbbk}
\newcommand{\FF}{\mathbb{F}}
\newcommand{\sslash}{\mathord{/\mkern-6mu/}}
\newcommand{\Aut}{\operatorname{Aut}}
\newtheorem{theorem}{Theorem}[section]
\newtheorem*{theorem*}{Theorem}
\newtheorem*{proposition*}{Proposition}
\newtheorem*{conjecture*}{Conjecture}
\theoremstyle{definition}
\newtheorem{definition}[theorem]{Definition}
\newtheorem{example}[theorem]{Example}
\newtheorem*{remark}{Remark}
\title{If you can distinguish, you can express: \\ Galois theory, Stone--Weierstrass, machine learning, and linguistics}
\author{Ben Blum-Smith, Claudia Brugman, Thomas Conners, and Soledad Villar}
\date{}
\begin{document}
\maketitle

\begin{abstract}
This essay develops a parallel between the Fundamental Theorem of Galois Theory and the Stone--Weierstrass theorem: both can be viewed as assertions that tie the distinguishing power of a class of objects to their expressive power. We provide an elementary theorem connecting the relevant notions of ``distinguishing power''. We also discuss machine learning and data science contexts in which these theorems, and more generally the theme of links between distinguishing power and expressive power, appear. Finally, we discuss the same theme in the context of linguistics, where it appears as a foundational principle, and illustrate it with several examples.
\end{abstract}

\section{Introduction}\label{sec:intro}

This article identifies a theme common to certain important results in algebra and analysis, which also manifests as a fundamental principle in linguistics. It is a collaboration between a pair of mathematicians and a pair of linguists. In this brief introduction, we outline the main idea through the story of how the collaboration came about.

It begins with a connection between analysis and algebra that the mathematicians noticed while working on equivariant machine learning. Two seemingly unrelated tools from different areas of mathematics are used in an applied domain to obtain results of a very similar form. The tools are the Stone--Weierstrass theorem and the Fundamental Theorem of Galois Theory; the applied domain is machine learning.

The Stone--Weierstrass theorem is a workhorse in machine learning. It has been extensively used, since at least the late 1980s, to show that the hypothesis classes of functions that define the machine learning models are, in some sense, universally approximating \cite{hornik1989multilayer}. This result is a claim about the expressivity of the classes of functions that are used for learning. In the classical statistical tradition, very expressive models were often considered inadequate due to their tendency to overfit \cite{geman1992neural}, but recent machine learning paradigms employ extremely expressive (overparameterized) models that generalize well while fitting the training data almost perfectly. This is what is commonly known in machine learning as \emph{benign overfitting} \cite{bartlett2020benign}, making universality a desired property of contemporary machine learning models.

Galois theory is not as widely used in machine learning. It shows up in a specific subfield known as equivariant machine learning, which designs and studies models that are invariant or equivariant with respect to group actions. The motivation for incorporating symmetries into machine learning models is two-fold: (1) to design machine learning models on objects that can be expressed in many equivalent ways (e.g., graphs are typically expressed as adjacency matrices, but the functions learned should be independent of the nodes' ordering), (2) to design machine learning models with applications to natural sciences, where the symmetries arise from physical law. In this field, Galois theory is a natural tool. Perhaps surprisingly, it provides means to design machine learning models that are universal in a similar sense as the Stone--Weierstrass theorem.   

After a significant period of time working with both theorems in this context, the mathematicians began to see a parallel. The Stone--Weierstrass theorem and the Fundamental Theorem of Galois theory can both be seen as telling a story of the form:
\emph{If you can distinguish, you can express}. Since the notions of {\em distinction} and {\em expression} struck them as relevant to the way language works, they reached out to the linguists to see if related principles were at play in that field. 

This article is the result. We elaborate on the parallel, give a theorem that provides a direct connection between the relevant notions of \emph{distinguish}, and discuss some contexts in which these ideas have arisen in data science and machine learning. We then discuss how the same principle manifests in a foundational way in linguistics, which we illustrate with several examples.

\section{What does the Fundamental Theorem of Galois Theory have to do with the Stone--Weierstrass theorem?}\label{sec:main-idea}

In this section, we show that both the Stone--Weierstrass theorem and the Fundamental Theorem of Galois Theory can be viewed as articulating the following principle:

\begin{center}
    {\em If you can distinguish, then you can express (and conversely).}
\end{center}

This is more transparent for the Stone--Weierstrass theorem (e.g., \cite[Chapter~9, Theorem~34]{royden}). One articulation of the theorem is as follows:

\begin{theorem*}[Stone--Weierstrass]
    Let $X$ be a compact Hausdorff topological space. Let $C(X,\RR)$ be the Banach space of continuous real-valued functions on $X$, with the sup norm; it is a Banach algebra under pointwise multiplication of functions. Let $S\subset C(X,\RR)$ be a subset. Then the algebra $\RR[S]$ generated over $\RR$ by $S$ is dense in $C(X,\RR)$ if and only if the elements of $S$ separate the points of $X$.
\end{theorem*}

The ``only-if'' direction is an immediate consequence of Urysohn's lemma. Compact Hausdorff spaces are normal, so Urysohn's lemma implies that any two points $x_1,x_2\in X$ are separated by {\em some} continuous function, i.e., there exists some $f\in C(X,\RR)$ with $\varepsilon:=|f(x_1)-f(x_2)| > 0$. If $x_1,x_2$ are not separated by any element of $S$, then they are not separated by any element of $\RR[S]$, and thus no element of $\RR[S]$ can be closer than $\varepsilon / 2$ to $f$ in the sup norm.

The more substantive direction, for which we will not outline the proof, is the ``if'' direction. It asserts that if the set of functions $S$ is rich enough to separate any pair of points in $X$, then given any accuracy level $\varepsilon > 0$, any function $f\in C(X,\RR)$ can be $\varepsilon$-approximated (in the sup norm) by some $f'\in \RR[S]$. In other words, $f$ is $\varepsilon$-uniformly approximated on $X$ by $f'$. 

Said differently, if the elements of $S$ can distinguish the points of $X$, then one can express any continuous function in terms of these elements---where in this context,  {\em express} means to create an $\varepsilon$-approximation to the desired function using the elements of $S$ as ingredients, mixed together via the algebra operations of linear combination and multiplication. The easier, only-if direction of the theorem articulates the converse: if $S$ can express the elements of $C(X,\RR)$ in the above sense, then they can distinguish the points of $X$.

The sense in which the Fundamental Theorem of Galois Theory (e.g., \cite[pp.~239]{jacobson}) articulates the same principle (i.e., {\em distinguish $\Leftrightarrow$ express}) is perhaps more subtle, at least in the usual formulation of the theorem, which is as follows:

\begin{theorem*}[Fundamental Theorem of Galois Theory]
    Let $L/k$ be a finite, normal, separable field extension. Then the group $G:=\Aut_k(L)$ of $k$-algebra automorphisms of $L$ is finite of order $[L:k]$, and the set of fixed points $L^G$ for the action of $G$ on $L$ is precisely $k$. Furthermore, the subfields $k\subset K\subset L$ of the extension are in inclusion-reversing bijection with the subgroups $H$ of $G$. The bijection is given by the inverse maps
    \[
    L\supset K \mapsto \Aut_K(L)\subset G
    \]
    from subfields to subgroups, and 
    \[
    G\supset H \mapsto L^H\subset L
    \]
    from subgroups to subfields.\footnote{The theorem is usually stated with the additional information that the bijection between subgroups and subfields sends index of subgroups in $G$ to degree of field extensions over $k$, and normal subgroups of $G$ to normal extensions of $k$; we relegate this additional information to the present footnote as it plays no role in what follows.}
\end{theorem*}

Where are the distinguishing and expressing? We answer as follows:

Let $H$ be some subgroup of $G$. Suppose we can find some elements $f_1,\dots,f_s$ of $L$ with the following pair of properties:
\begin{enumerate}
    \item $h f_j = f_j$ for every $j=1,\dots, s$ and every $h\in H$.\label{item:invariant}
    \item For $g\in G\setminus H$, there exists some $j^\star\in \{1,\dots, s\}$ such that $g f_{j^\star} \neq f_{j^\star}$.\label{item:only-invariant-under}
\end{enumerate}
Then consider the field $k(f_1,\dots,f_s)$ generated by the $f_j$'s over $k$. Property \ref{item:invariant} tells us that all the elements of $H$ are $k(f_1,\dots,f_s)$-automorphisms of $L$; 
Property \ref{item:only-invariant-under} tells us that the only elements of $G$ that are $k(f_1,\dots,f_s)$-automorphisms of $L$ are those that lie in $H$. Thus, the map $K\mapsto \Aut_K(L)$ from subextensions to subgroups maps $k(f_1,\dots,f_s)$ to $H$. Since the Fundamental Theorem tells us that the composition of this map with the map $H\mapsto L^H$ is the identity, we have $k(f_1,\dots,f_s) = L^H$.

The previous paragraph extracts a consequence of the Fundamental Theorem of Galois Theory whose hypothesis can be summarized as follows: the elements $f_1,\dots,f_s$ distinguish the elements of $H$ from the rest of the elements of $G$ (by being simultaneously invariant [only] under the elements of $H$). The conclusion can be summarized: any $H$-invariant element of $L$ can be expressed in terms of $f_1,\dots,f_s$. In this context, {\em express} means to write the desired element of $L$ using $f_1,\dots,f_s$ as ingredients, mixed together via the field operations of $k$-linear combination, multiplication and division. Thus, the theorem tells us that {\em distinguish $\Rightarrow$ express}.

The direction {\em express $\Rightarrow$ distinguish} follows from the theorem as well. If $f_1,\dots,f_s$ generate $L^H$ over $k$, then the map $H\mapsto L^H$ sends $H$ to $k(f_1,\dots,f_s)$; the composition with the inverse map $K\mapsto \Aut_K(L)$ is the identity, so the image of $k(f_1,\dots,f_s)$ under this map is $H$. In other words,  properties~\ref{item:invariant} and \ref{item:only-invariant-under} above are satisfied, i.e., the (only) $k$-automorphisms of $L$ that fix $f_1,\dots,f_s$ are precisely those in $H$.

Summarizing this section so far: both the Stone--Weierstrass Theorem and the Fundamental Theorem of Galois Theory can be interpreted as statements of the form {\em distinguish $\Leftrightarrow$ express}, as promised.

Our attention was called to this connection between the Stone--Weierstrass Theorem and the Fundamental Theorem of Galois Theory by the interrelated roles they have played in the work of the mathematician authors in equivariant machine learning. In ML, one is often interested in showing that a given ``architecture''---i.e., a specific parametrization of some class $\mathcal{F}$ of functions on a data space $X$ by some parameters $\theta$ in a parameter space $\Theta$---is {\em expressive}. This means that, given any target function $\widehat f$ on $X$ that one wants to learn, there exist values $\theta$ of the parameters, such that the parametrized function $f_\theta$ well-approximates the target function $\widehat f$. As discussed in the introduction, the Stone--Weierstrass Theorem is a basic tool in such results \cite{hornik1989multilayer, pinkus1999approximation}. 

In equivariant ML, one is frequently interested in target functions $\widehat f$ that are invariant with respect to the action of a group $G$ on $X$, in which case it often makes sense to choose the parametrized class $\mathcal{F}$ so that $f_\theta\in\mathcal{F}$ is also $G$-invariant for any values $\theta$ of the parameters. In this situation, applying the Stone--Weierstrass Theorem requires checking that the functions $f_\theta\in\mathcal{F}$ are able to separate the orbits of $G$ on $X$ as $\theta$ varies, and one is often then able to conclude that there exist $f_\theta$'s that well-approximate the target function $\widehat f$.

The Fundamental Theorem of Galois Theory also (like Stone--Weierstrass) allows a conclusion of expressivity, as discussed above; but we only noticed the full parallel by way of a third theorem which tells another version of the same story, and can be used to intermediate between the two. This theorem is also classical, but not quite {\em as} classical (or as well-known): {\em Rosenlicht's theorem}, in the theory of algebraic groups.

\begin{theorem}[Rosenlicht's theorem \cite{rosenlicht1956some}]
    Let $G$ be an algebraic group, acting regularly on an irreducible algebraic variety $V$ over a field $k$, with algebraic closure $\overline k$. Let $k(V)$ be the function field of $V$, and $k(V)^G$ the field of rational $G$-invariants. Then the elements of $k(V)^G$ separate the orbits of $G$ on the $\overline k$-points of $V$ away from a proper Zariski-closed subset. 
    
    Conversely, if $f_1,\dots,f_s\in k(V)^G$ separate orbits of $G$ on the $\overline k$-points of $V$ away from a proper Zariski-closed subset, and if, furthermore, the field extension $k(V)/k(f_1,\dots,f_s)$ is separable, then $f_1,\dots,f_s$ generate $k(V)^G$ over $k$.
    
    %(As usual, $\overline k$ denotes the algebraic closure of $k$.)
\end{theorem}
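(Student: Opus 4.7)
The plan is to address both directions through the rational map $\phi=(f_1,\dots,f_s)\colon V\to W$ attached to a set of invariants, where $W\subset\mathbb{A}^s$ is the Zariski closure of the image and $k(W)=k(f_1,\dots,f_s)$.

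For the forward direction, I would start by choosing a finite generating set $f_1,\dots,f_n$ of $k(V)^G$ over $k$ (it is finitely generated because it embeds in the finitely generated field extension $k(V)$ and has bounded transcendence degree). The associated rational map $\phi$ is dominant and $G$-invariant, with $k(W)=k(V)^G$. The crucial input is the classical dimension identity
\[
\operatorname{tr.deg}_k k(V)^G \;=\; \dim V - d,
\]
where $d$ is the dimension of a generic $G$-orbit; this reflects the fact that orbits are locally closed of constant dimension on a dense $G$-stable open. Since $\dim W = \operatorname{tr.deg}_k k(W)$, the generic fiber of $\phi$ has dimension $d$, matching the generic orbit; as orbits are contained in fibers, a generic orbit is open in its fiber. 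Over $\overline k$, the extra components of the fiber form a proper closed subvariety, and by enlarging the generating set with finitely many further invariants that distinguish these components, we can absorb the bad locus into a proper Zariski-closed subset of $V$; outside it, invariants separate orbits.

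For the converse, set $F=k(f_1,\dots,f_s)$ and assume $k(V)/F$ is separable. By the forward direction applied both to $F$ and to $k(V)^G$, both fields have transcendence degree $\dim V - d$ over $k$, so the extension $k(V)^G/F$ is algebraic. Let $h\in k(V)^G$ and put $E=F(h)$; since $E\subseteq k(V)$ and $k(V)/F$ is separable, so is $E/F$, of some degree $m$. Form the refined rational map $\psi=(\phi,h)\colon V\to W'$, where $W'$ is the Zariski closure of the image in $W\times\mathbb{A}^1$; then $k(W')=E$ and the projection $W'\to W$ has degree $m$. Because $f_1,\dots,f_s$ separate orbits on a dense open subset, a generic fiber of $\phi$ is a single orbit, and $G$-invariance of $h$ forces $h$ to be constant on it, so $\psi$ collapses an entire generic fiber of $\phi$ to a single point of $W'$. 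On the other hand, separability of $E/F$ implies that a generic $\overline k$-point of $W$ has $m$ distinct preimages in $W'(\overline k)$, and $V$ dominates $W'$, so every such preimage must be hit. These two facts force $m=1$, i.e., $h\in F$.

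The hardest step is the forward direction, specifically (a) the transcendence-degree identity $\operatorname{tr.deg}_k k(V)^G = \dim V - d$, which is the conceptual heart of the theorem and relies on structural properties of algebraic group actions, and (b) upgrading ``a generic orbit is open in its fiber'' to ``a generic fiber is a single orbit'' over $\overline k$, which requires some care with constructibility and with enlarging the generating set. Once the forward direction is in hand, the converse is a clean separability argument: the only nontrivial input is that a separable extension of degree $m$ has $m$ geometric points above the generic point below.
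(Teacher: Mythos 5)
The paper does not actually prove this theorem --- it states a paraphrase and cites Rosenlicht's original article \cite{rosenlicht1956some} --- so there is no in-paper argument to compare yours against; I will judge the proposal on its own terms. Your converse direction is essentially the standard argument and is sound: the generic fiber of $\phi$ meets the locus of separation in a single orbit, any $h\in k(V)^G$ is constant there, and separability of $E=F(h)$ over $F$ forces the $m=[E:F]$ distinct geometric points of $W'$ over a generic point of $W$ to all be hit by that single collapsed fiber, whence $m=1$. (One small simplification: you do not need the forward direction to see that $k(V)^G/F$ is algebraic; the easy inequality $\operatorname{tr.deg}_k k(V)^G\le \dim V-d$, which follows from invariants being constant on orbits, combined with $\dim W=\dim V-d$ from the separation hypothesis, already gives it.)

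The forward direction, however, has a genuine gap at exactly the point where the theorem's content lives. First, the identity $\operatorname{tr.deg}_k k(V)^G=\dim V-d$ cannot be quoted as an input: the inequality $\le$ is the easy one, while $\ge$ asserts that there are \emph{enough} invariants and is essentially equivalent to the separation statement you are trying to prove; ``orbits are locally closed of constant dimension on a dense open'' only yields $\le$. Second, the repair step --- ``enlarging the generating set with finitely many further invariants that distinguish these components'' --- cannot work. Once $f_1,\dots,f_n$ generate $k(V)^G$, \emph{every} invariant is a rational function of them and hence constant on the fibers of $\phi$ wherever defined; if a generic fiber contained two distinct orbits meeting the locus of definition, no invariant at all could separate them. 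Nor can the ``extra components'' simply be discarded, since their union over all fibers need not lie in any proper closed subset (if each generic fiber were two orbits, the ``extra'' ones would sweep out a dense set). The actual theorem is precisely the assertion that the generic fiber of the rational quotient is a single orbit, and this must be established directly --- Rosenlicht does it via a Noetherian-induction and constructibility argument on the graph of the orbit equivalence relation in $V\times V$ --- rather than arranged after the fact. As written, the forward direction assumes its conclusion.
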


We have paraphrased Rosenlicht's original formulation here to emphasize the algebraic (as opposed to geometric) content, and to make it easier for the modern reader to read. (Rosenlicht's paper, written in 1956, is in the language of the {\em Weil foundations} of algebraic geometry \cite{weil1946foundations}, which were superseded by Grothendieck's foundational work shortly thereafter \cite{grothendieck1960elements}; a modern education in algebraic geometry is in terms of Grothendieck's foundations.)

This theorem is yet another instance of {\em distinguish $\Leftrightarrow$ express}. Its interpretation of {\em express} is the same as the Galois theorem, while its {\em distinguish} comes very close to that of Stone--Weierstrass. To elaborate---if $k$ is of characteristic zero (for example if $k=\RR$ or $\CC$, the cases relevant to ML), then any extension is separable, and the theorem states that elements $f_1,\dots,f_s$ of the field $k(V)^G$ of rational invariants generically separate $G$-orbits over the algebraic closure (distinguish), if and only if they generate all rational invariants with respect to the field operations (express).

The present inquiry began with the mathematician authors' work on \cite{blum2024galois}, which used Galois theory to conclude that certain proposed invariants generate a field of rational invariants; then Rosenlicht's theorem to conclude from this that they generically separate orbits; and finally the Stone--Weierstrass Theorem (really, a standard ML result based on it, see Section~\ref{sec:shubhendu} below) to conclude from this that a model based on these invariants is expressive. (This work is described in a little more detail in Section~\ref{sec:almost-universal} below.) Taking a step back, it began to seem to us that all three theorems tell the same story.

\begin{remark}
    We hasten to comment that, while all three of the above-discussed theorems manifest the principle {\em distinguish $\Leftrightarrow$ express}, not every statement of this form that one might hope for actually holds: everything depends on how the general notions of {\em distinguish} and {\em express} are pinned down and operationalized in a given context. In invariant theory, an important counterexample to the general principle is as follows. If $G$ is a finite group and $V$ a finite-dimensional vector space over a field $\kk$, then generators for the algebra $\kk[V]^G$ of invariant polynomials on $V$ do in fact separate the orbits of $G$ on $V$---i.e., if you can express in the sense of algebra generation, then you can distinguish. However, a set of invariant polynomials able to separate orbits does not necessarily generate $\kk[V]^G$ as an algebra. So if we ask too much of the word \emph{express} in this context, then \emph{distinguish $\Rightarrow$ express} may fail.
    
    In the more general situation that $G$ is a linear algebraic group, then generators for $\kk[V]^G$ may not necessarily distinguish orbits of $G$ on $V$: orbits may fail to be closed, and a non-closed orbit cannot be distinguished by any invariant polynomial from any other orbit whose closure intersects its closure. So if we ask for too much from the word {\em distinguish}, then even the converse principle, {\em express $\Rightarrow$ distinguish}, may fail. 
    
    There is a standard relaxation that rescues {\em express $\Rightarrow$ distinguish} in this context: there is an algebraic variety $V\sslash G$, the {\em categorical quotient} or {\em GIT quotient}, that is universal (in the category of algebraic varieties) with respect to receiving a morphism from $V$ constant along orbits of $G$; and generators for $\kk[V]^G$ do separate the points of $V\sslash G$. (In the case of $G$ finite, $V\sslash G$ is precisely the orbit space.) However, even under this relaxation, {\em distinguish $\Rightarrow$ express} does not hold: invariant functions that separate the points of $V\sslash G$ (known as a {\em separating set}) may fail to generate $\kk[V]^G$.
    
    A simple example---even for a finite group over an algebraically closed field of characteristic zero---is given by $\kk=\CC$ and $G=\ZZ/n\ZZ$, acting faithfully on $V = \CC^2$ by scalar matrices. In this case the orbit of a point consists of its images under scalar multiplication by an $n$th root of unity, and the invariant algebra $\CC[x,y]^G$ is the $n$th {\em Veronese ring}, the subalgebra of $\CC[x,y]$ spanned by monomials with total degree a multiple of $n$. It is generated (as an algebra) by the monomials of degree exactly $n$, of which there are $n+1$. This is a minimal generating set: one can see by considerations of degree that none of them can be expressed as a polynomial in the others. However, the three monomials $x^n$, $x^{n-1}y$, and $y^n$ already separate orbits. The values of $x^n$ and $y^n$ already pin down $x,y$ up to multiplication by (possibly unrelated) $n$th roots of unity, and then the value of $x^{n-1}y$ pins down the relation between the two. (Indeed, $x^{n-j}y^j$ would do the same for any $1\leq j \leq n-1$ relatively prime to $n$.) This example shows that a separating set can be much smaller than a generating set. It is also possible for it to be much lower degree. While generating sets are the traditional object of study of invariant theory, separating sets became their own locus of interest around 20 years ago, and there is now a significant literature on them, e.g., \cite{ derksen-kemper, domokos2007typical, dufresne2008separating, draisma2008polarization, kemper2009separating, sezer2009constructing, dufresne2009separating, dufresne2009cohen, kohls-kraft, elmer2012separating, dufresne2013finite, kohls2013separating,  dufresne2015separating, domokos, reimers2018separating, reimers2020separating, lopatin2021separating, domokos2022separating, kemper2022separating, domokos2024separating, schefler2025separating}.
\end{remark}

\section{A precise connection between the two notions of \emph{distinguish} in Stone--Weierstrass and Galois Theory}
\label{sec.formal}

The previous section explains how both the Stone--Weierstrass theorem and the Fundamental Theorem of Galois Theory tell a story of the form \emph{distinguish $\Leftrightarrow$ express}, with different notions for \emph{distinguishing} and \emph{expressing} in each case. This connection has the character of a formal analogy. While we trust the reader sees a close connection between the two notions of {\em expressing} (using the tools of $\{\RR\text{-linear combination},\times,\varepsilon\text{-approximation}\}$ in the Stone--Weierstrass case, as compared with $\{k\text{-linear combination},\times,\div\}$ in the Galois case), the two notions of {\em distinguishing} are not as prima facie similar. In this section, we tighten the analogy with an elementary theorem (first presented in \cite{blum2024galois}) that directly relates the two. 

\begin{definition}[generically Stone--Weierstrass-distinguishing set of functions for a group $G$]
Let $X$ be a topological measure space with an action by a group $G$, and consider a set of $G$-invariant functions $f_1,\ldots f_r$ from $X$ to some abelian group $\mathbb F$. We say that  $f_1,\dots,f_r$ are  {\em generically SW-distinguishing for $G$} (or what is more commonly known as {\em generically separating}) if there exists a closed, measure zero, $G$-stable subset $B\subset X$ (the ``bad set'') such that  $x_1,x_2\in X\setminus B$ must lie in the same orbit of $G$ if $f_j(x_1)=f_j(x_2)$ for all $j=1,\ldots,r$.     
\end{definition}

Now let $H\subset G$ be a subgroup of finite index. The connection we draw in this section shows that we can extend a generically SW-distinguishing set for $G$ to a generically SW-distinguishing set for $H$ by adding a set of functions $f^\star_1,\dots,f^\star_s:X\rightarrow \FF$ that distinguish $G$ from $H$ in a Galois sense we define below.
To do so, we consider a class of functions $\mathcal{F}$ from $X$ to $\FF$ that is an abelian group under pointwise addition, is closed under the natural action of $G$ by $( g  f)(x):=f( g ^{-1}x)$ for $x\in X,  g \in G$, and such that the nonzero elements of $\mathcal{F}$ have closed, measure-zero vanishing sets. For many examples, $\FF$ is $\RR$ or $\CC$, and  $\mathcal{F}$ could be the class of polynomial functions, or, if $X$ is a $\CC$- or $\RR$-vector space (or variety, or analytic manifold) and $G$ a linear group (or algebraic group, or Lie group acting by analytic morphisms), then $\mathcal{F}$ could be the class of analytic functions; and other classes that come up in machine learning also have this property (for example certain classes of functions related to multi-layer perceptrons with a prespecified analytic activation function, such as a sigmoid like the logistic function \cite{berkson1944application}). We remark that we do not explicitly impose any restrictions on the way $G$'s action on $X$ interacts with the latter's topological or measure structure. The necessary restrictions are instead hidden in the assumptions on $\mathcal{F}$, in the sense that if $G$'s action on $X$ does not cooperate with the topological and measure structures, then classes $\mathcal{F}$ satisfying the hypotheses may be hard to find. 

\begin{definition}[Galois-distinguishing set of functions for a subgroup $H\subset G$]
   Let $X,\mathbb F, G,H,\mathcal F$ be as above. We say that the functions $f^\star_1,\dots,f^\star_s:X\rightarrow\FF$ Galois-distinguish $H$ from $G$ if they are $H$-invariant functions in $\mathcal{F}$ such that the only group elements $g\in G$ that fix all of them belong to $H$. That is, for $ g \in G$ we have
    \[
     g  f^\star_j = f^\star_j\text{ for all }j\in 1,\ldots, s\quad \Rightarrow \quad  g \in H.
    \]
\end{definition}

\begin{theorem}[Theorem 3.1 in \cite{blum2024galois}]\label{thm:galois-for-ML}
    Let $X, \mathbb F, G, H,\mathcal{F}$ be as above.  Suppose $f_1,\dots,f_r: X\rightarrow\FF$ are $G$-invariant functions that are generically SW-distinguishing for $G$. If $f^\star_1,\dots,f^\star_s:X\rightarrow\FF$ are $H$-invariant functions belonging to $\mathcal{F}$ that Galois-distinguish $H$ from $G$,
    then $f_1,\dots,f_r,f^\star_1,\dots,f^\star_s$ are generically SW-distinguishing for $H$.
\end{theorem}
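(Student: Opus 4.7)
The plan is to argue by contradiction. Suppose $x_1,x_2\in X\setminus B'$ (a bad set to be specified below) have matching values on every $f_j$ and every $f^\star_k$. The SW-distinguishing hypothesis for $G$ first yields $x_2 = g x_1$ for some $g\in G$, and the task is to upgrade this to $g\in H$, so that $x_1,x_2$ lie in the same $H$-orbit. Exploiting finite index, I would enumerate left cosets $G = \bigsqcup_{i=1}^n g_i H$ with $g_1 = e$; the case $g\in g_1 H$ is the desired conclusion, so the work is to rule out $g\in g_i H$ for each $i\geq 2$.

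For each such $i$, the Galois-distinguishing hypothesis supplies an index $k_i$ with $g_i f^\star_{k_i}\neq f^\star_{k_i}$, and hence $\psi_i := g_i^{-1} f^\star_{k_i} - f^\star_{k_i}$ is a nonzero element of $\mathcal{F}$ (using closure of $\mathcal{F}$ under the $G$-action and pointwise sum), whose zero set $V_i\subset X$ is therefore closed and of measure zero. The decisive observation is that if $g = g_i h$ with $h\in H$ and one sets $y := h x_1$, then by the $H$-invariance of $f^\star_{k_i}$,
\[
f^\star_{k_i}(g_i y) \;=\; f^\star_{k_i}(g x_1) \;=\; f^\star_{k_i}(x_2) \;=\; f^\star_{k_i}(x_1) \;=\; f^\star_{k_i}(y),
\]
so $y\in V_i$, and hence $x_1 = h^{-1} y$ lies in the $H$-saturation $H\cdot V_i$.

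The candidate bad set is therefore $B' := B \cup \bigcup_{i=2}^n H\cdot V_i$, which is $H$-stable by construction and whose definition turns the previous display into the required contradiction whenever $i\geq 2$. The step I expect to be the main obstacle is verifying that this $B'$ meets the technical requirements in the definition of generically SW-distinguishing, namely that it remain closed and of measure zero. A single translate $h\cdot V_i$ is the zero set of the nonzero element $h\psi_i\in\mathcal{F}$, hence closed and measure-zero individually, but the union over $h\in H$ is an essentially uncontrolled union of such sets, which need not remain either closed or of measure zero. This is precisely where the paper's informal caveat---that ``the necessary restrictions are hidden in the assumptions on $\mathcal{F}$''---must be cashed in: one needs the $H$-action to cooperate with the topology and measure (for example, $H$ countable and acting by measure-preserving homeomorphisms, or an algebraic/analytic framework in which vanishing loci have tame closure behavior under the group action) so that the saturation $H\cdot V_i$ is still admissible. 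Failing that, the proof would need an $H$-stable closed measure-zero replacement built directly---e.g., a genuinely $H$-invariant nonzero element of $\mathcal{F}$ whose zero set contains each $V_i$---before the orbit-matching contradiction can be invoked.
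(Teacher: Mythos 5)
Your skeleton matches the paper's: the same case split (different $G$-orbits handled by the $f_j$; same $G$-orbit but different $H$-orbits handled by the $f^\star_j$), and the same mechanism of building the bad set out of vanishing sets of differences of the form $g f^\star - f^\star$. The gap you flag at the end, however, is a genuine gap in your write-up, and the key point is that it closes using only the stated hypotheses --- no countability of $H$, no measure-preservation, and no need to manufacture an $H$-invariant function. The observation you are missing is that the saturation $H\cdot V_i$ is automatically a \emph{finite} union of admissible sets. For $h\in H$, the translate $h\cdot V_i$ is the vanishing set of $h\psi_i = (hg_i^{-1})f^\star_{k_i} - h f^\star_{k_i} = (hg_i^{-1})f^\star_{k_i} - f^\star_{k_i}$ (using that the action distributes over pointwise sums and that $f^\star_{k_i}$ is $H$-invariant); this is again a nonzero element of $\mathcal{F}$ (nonzero because $hg_i^{-1}\notin G_{k_i}$, where $G_{k_i}\supseteq H$ denotes the stabilizer of $f^\star_{k_i}$ in $G$), hence has closed, measure-zero vanishing set. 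Moreover $(hg_i^{-1})f^\star_{k_i}$ depends only on the coset $hg_i^{-1}G_{k_i}$, and since $[G:H]<\infty$ forces $[G:G_{k_i}]<\infty$, only finitely many distinct functions --- hence finitely many distinct vanishing sets --- arise as $h$ ranges over $H$. A finite union of closed, measure-zero sets is closed and of measure zero, so your $B'$ is admissible after all.

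The paper packages this slightly differently: it takes the bad set to be the union of the vanishing sets of \emph{all} the nonzero differences $g f^\star_j - f^\star_j$ with $g\notin G_j$, notes that there are only finitely many such functions (one per pair $(j, gG_j)$, by the finite-index argument above), and gets $H$-stability for free because translation by $h\in H$ permutes this finite collection. That set contains your $\bigcup_{i\geq 2} H\cdot V_i$ and sidesteps the saturation issue entirely. So the paper's caveat about restrictions being ``hidden in the assumptions on $\mathcal{F}$'' is not the escape hatch here; the finite index of $H$ in $G$ is doing that work, and your fallback proposals are unnecessary.
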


%The proof of Theorem~\ref{thm:galois-for-ML} is in CITE and it is very elementary. It also gives an explicit description of the ``bad set" $B$ introduced by the $f^\star_j$'s.

\begin{proof}
    Since $f^\star_1,\dots,f^\star_s$ are $H$-invariant,  the stabilizer $G_j$ of each $f_j$ in $G$ contains $H$. The  hypothesis on the $f^\star_j$ imply $\bigcap_j G_j = H$.
    Since $[G:H]<\infty$, each $G_j$ has finite index in $H$. Thus, there are only finitely many functions of the form $g f^\star_j$, $g\in G$, namely one for each pair $(j, g G_j)$ consisting of $j\in [s]$ and a left coset of the stabilizer $G_j$. They all belong to $\mathcal{F}$, because $\mathcal{F}$ is $G$-stable. Thus, the finitely many functions
    \[
    g f^\star_j - f^\star_j, \; j\in[s], \;g\notin G_j
    \]
    all belong to $\mathcal{F}$ as well (because it is an abelian group). Furthermore, they are all nonzero because $g\notin G_j$ for each one. So by the hypothesis on $\mathcal{F}$, they all have closed, measure zero vanishing sets. Let 
    \begin{align} \label{eq:explicit-bad-set} 
    B=\bigcup_{j\in[s], g\in G\setminus G_j}\{x\in X:(g f^\star_j-f^\star_j)(x)=0\}
    \end{align}    
    be the union of these; by the above, this is a union of only finitely many distinct closed, measure zero sets, thus it is closed and measure zero. It is $H$-stable by construction.
    
    Meanwhile, because $f_1,\dots,f_r$ are SW-distinguishing for $G$, we know that there exists another closed, measure zero, $G$-stable set $B'$ on the complement of which any two distinct $G$-orbits are distinguished by some $f_j$.

    Then $B\cup B'$ is closed, measure zero, and $H$-stable (because $B'$ is $G$-stable and $B$ is $H$-stable). We claim that any $x_1,x_2 \in X\setminus(B\cup B')$ lying in distinct $H$-orbits are distinguished either by some $f_j$ or by some $f^\star_j$. Indeed, if $x_1,x_2$ lie in distinct $G$-orbits, then they are distinguished by some $f_j$; while if they lie in the same $G$-orbit but distinct $H$-orbits, then there exists $g\in G\setminus H$ with $g x_1 = x_2$. In the latter case, there exists $j\in [s]$ with $g \notin G_j$ because $H=\bigcap_j G_j$, and then 
    \begin{align*}
    f^\star_j(x_1) - f^\star_j (x_2) &= f^\star_j(g^{-1}x_2) - f^\star_j(x_2)\\
    &= (g f^\star_j-f^\star_j)(x_2)\\
    &\neq 0,
    \end{align*}
    where the final inequality is because $x_2\notin B$ (and $B$ contains the vanishing set of $g f^\star_j - f^\star_j$ by definition). So $x_1,x_2$ are distinguished by $f^\star_j$.
\end{proof}

\section{{\em Distinguish $\Leftrightarrow$ express} in data science and machine learning}\label{sec:in-data-science}

Because distinguishing data points, and expressing quantities of interest, are fundamental to so many mathematical tasks, the theme discussed above comes up in a wide variety of applications in machine learning and data science. We discuss a sample of these; it is far from comprehensive.

\subsection{Stone--Weierstrass in graph learning}

As mentioned in the introduction, the Stone--Weierstrass theorem is a tool that has been extensively used to prove universality results in machine learning. For example, the classical paper of Hornik, Stichcombe, and White from 1989 shows that certain classes of multilayer perceptrons (MLPs) are universal approximators \cite{hornik1989multilayer}. Their approach is to define a class of MLPs (which turns out to be an algebra) and show that it separates inputs. Then the Stone--Weierstrass theorem guarantees expressivity of the class of functions. Similar results for different neural network models are described in the 1999 survey by Pinkus \cite{pinkus1999approximation}.

These tools have later been used to prove analogous universality results for equivariant machine learning models, that is, machine learning models that respect symmetries. In equivariant machine learning one typically considers classes of functions $\mathcal F = \{f_\theta : X\to Y, \, \theta \in \mathbb R^d\}$ so that for every choice of parameters $\theta$, the corresponding function $f_\theta$ obeys a prescribed symmetry which is expressed as an invariance or equivariance with respect to a group action \cite{cohen2021equivariant}. Approaches to proving universality of these models include Stone--Weierstrass arguments \cite{dymuniversality, blum2024galois} (e.g., in point clouds), and arguments based on averaging already universal models along group orbits \cite{yarotsky2022universal, puny2021frame}, for example by having a hidden layer with an arbitrary number of channels where the group acts regularly \cite{ravanbakhsh2020universal}. However, attaining universality theorems for equivariant models can be trickier than for non-equivariant ones.

Graph neural networks (GNNs) \cite{duvenaud2015convolutional} are a great example of equivariant machine learning models with interesting expressivity properties. GNNs are defined on graphs $G=(V,E,X)$ where $V=[n]$ denotes the set of nodes, $E\in \mathbb R^{n\times n}$ denotes a matrix, often taken to be symmetric, giving weights on the edges, and $X\in \mathbb R^{n\times d}$ are the node features. The typical tasks GNNs perform are learning graph-level functions $f:G\to \mathbb R^k$ or node embeddings $f:G\to \mathbb R^{n\times k}$. Both of these learning tasks exhibit a symmetry due to the order of the nodes not being an intrinsic property of the graph itself (known in physics as a passive symmetry \cite{villartowards}), namely $(\pi V, \pi E \pi^\top, \pi X) = (V, E, X)$ for all permutations $\pi \in S_n$. Graph-level learning tasks are invariant to this group action whereas node-level learning tasks are equivariant. 

If a class of invariant functions under the symmetric group acting by conjugation separates orbits, then it can distinguish every pair of non-isomorphic graphs. Since the graph isomorphism problem is computationally intractable with current techniques (although there is a recent quasi-polynomial-time algorithm \cite{babai2016graph}), it follows from the \emph{express $\Rightarrow$ distinguish} direction of Stone--Weierstrass that standard implementations of graph neural networks are not universal \cite{chen2019equivalence}, except in cases where the complexity of the architecture is allowed to grow super-polynomially with the size of the input \cite{keriven2019universal, maron2019universality}. In fact, there is a large literature that studies the expressive power of graph neural networks in connection with graph-isomorphism tests \cite{morris2023weisfeiler, boker2023fine}.

\subsection{Almost universal invariant machine learning on point clouds via Galois theory}\label{sec:almost-universal}

Point clouds are a common data modality in several application domains, including computer vision, materials science, and cosmology. In some of these applications, each data point is a point cloud in 
$\mathbb R^{d\times n}$ modulo  translations, rotations, reflections, and permutations. Here $n$ is the number of points and $d$ is the dimension of the space.

\begin{wrapfigure}{r}{0.4\textwidth}
  \begin{center}
\vspace{-1cm}
\includegraphics[clip=true, trim={14cm 5cm 15cm 3cm}, width=0.1\textwidth]{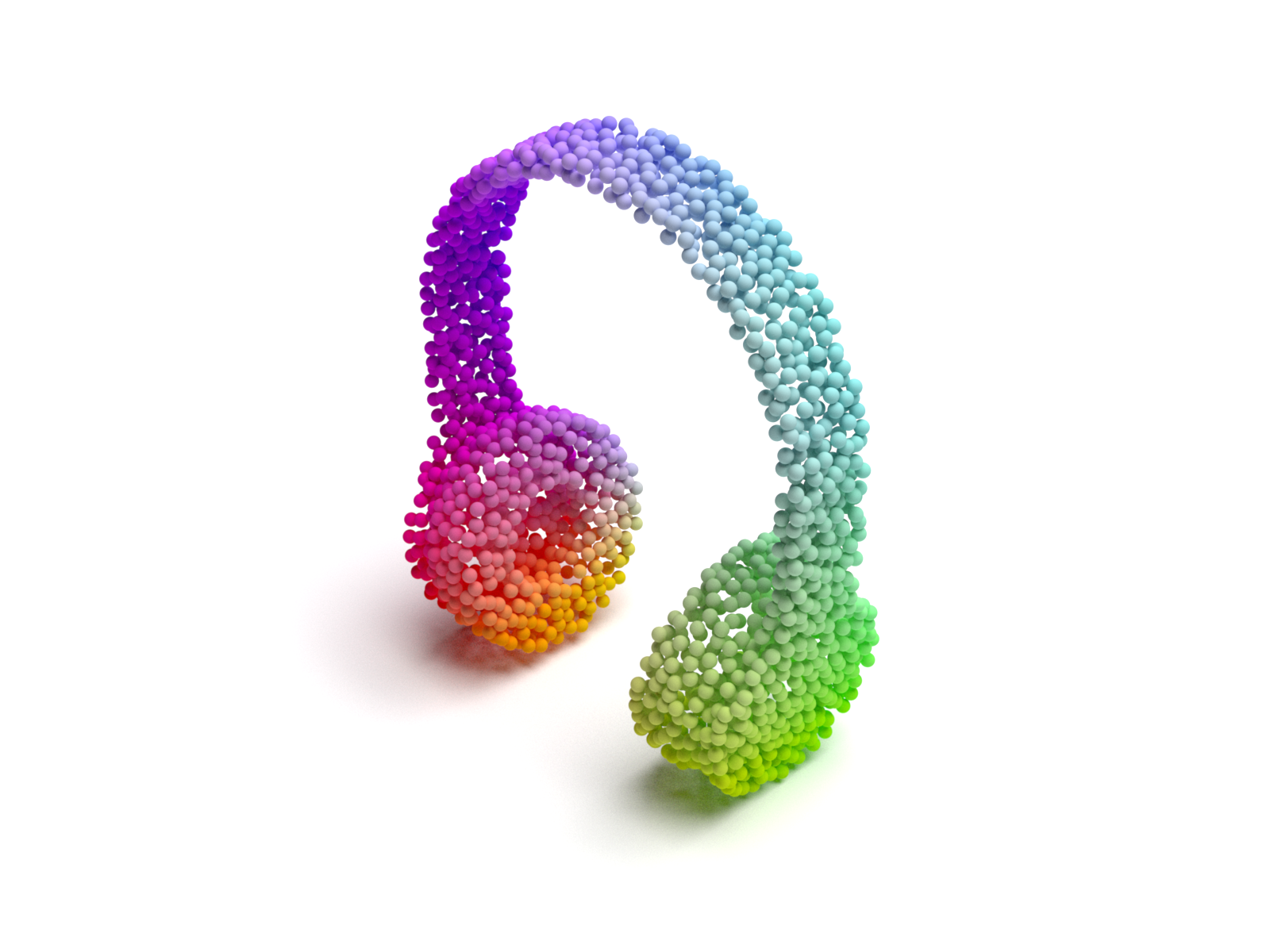}
\includegraphics[clip=true, trim={14cm 5cm 15cm 3cm},width=0.1\textwidth]{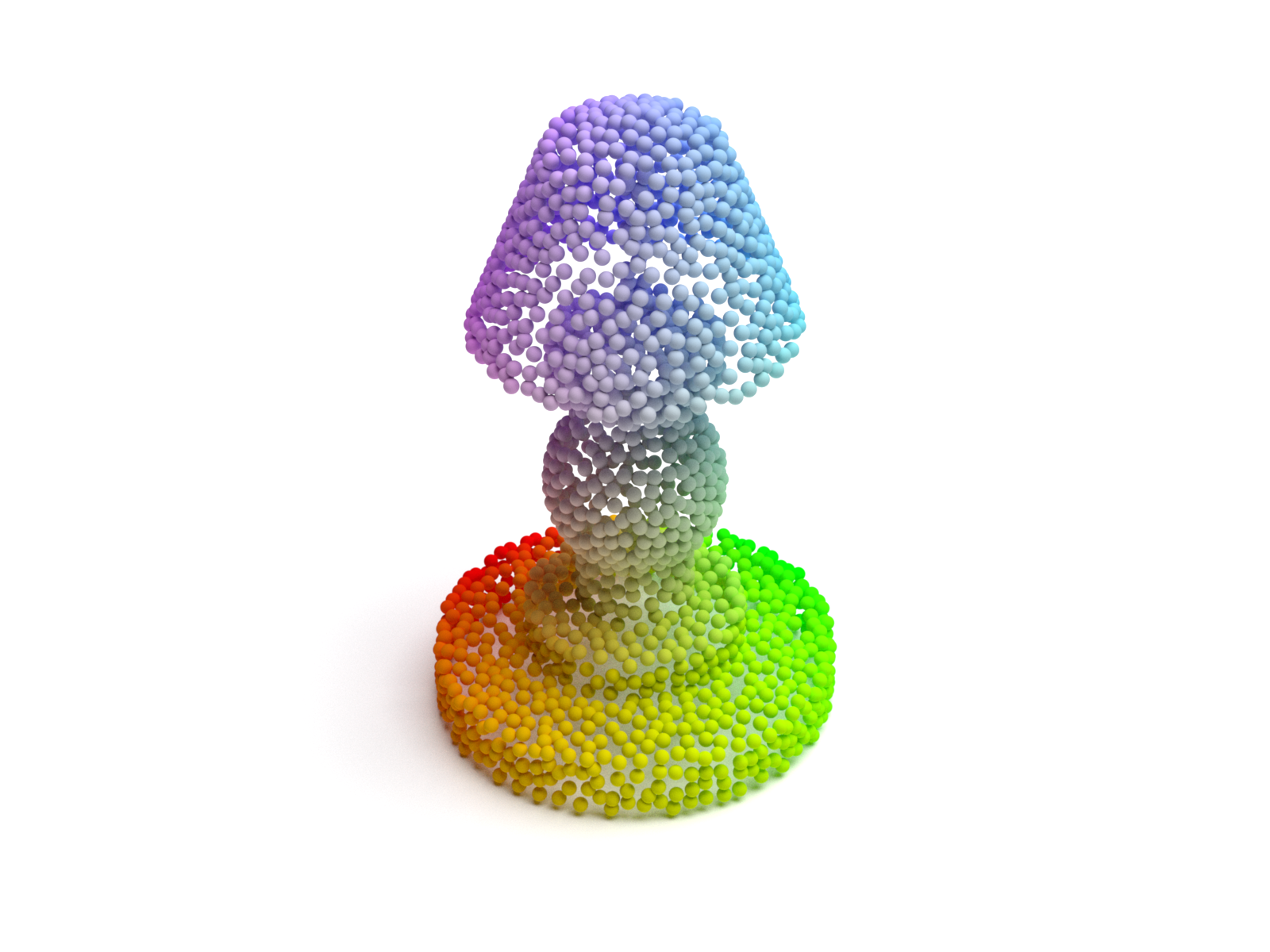}
\includegraphics[clip=true, trim={14cm 5cm 15cm 3cm},width=0.12\textwidth]{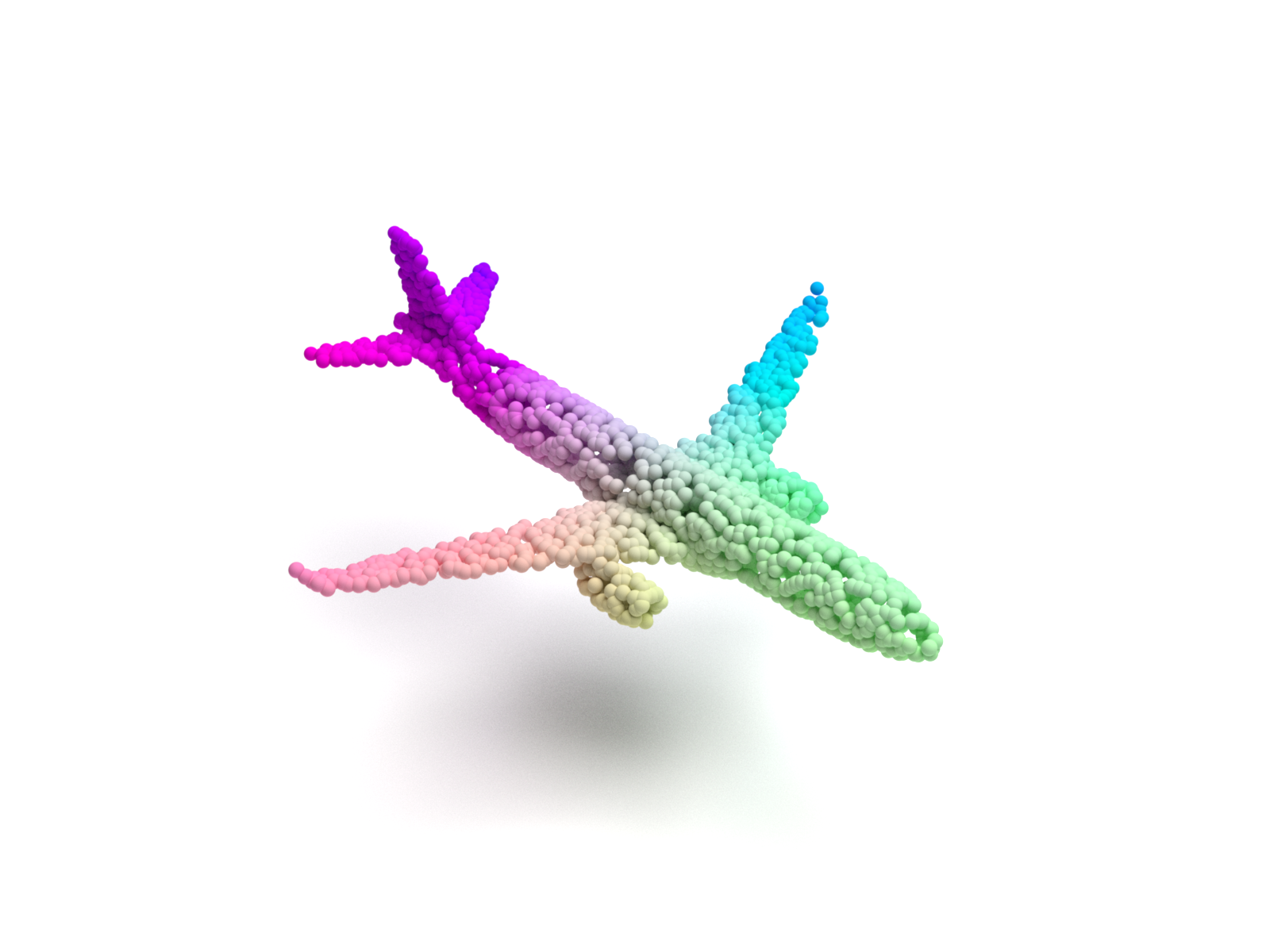}
  \end{center}
  \caption{3D objects represented as point clouds from \cite{shapenet2015}.}
\end{wrapfigure}
A function $f:\mathbb R^{d\times n} \to \mathbb R$ on a point cloud $P\in \RR^{d\times n}$ can be deformed into a function $\bar f$ invariant with respect to translations by defining $\bar f(P):= f(P-\bar P)$, where $\bar P$ is  the center of mass. The point cloud $P-\bar P$ is centered on the origin, and is the unique point cloud in $P$'s equivalence class under translations that has this property, so the new function $\bar f$ is both well-defined and translation-invariant.  This simple idea is an example of canonicalization \cite{kaba2023equivariance}, i.e., implementing invariance or equivariance via computing a canonical representative in every group orbit. This is closely related to the generalization of Cartan's notion of {\em moving frames} due to Fels and Olver \cite{fels1998moving, fels1999moving, fels2001moving, olver2003moving}.

In order to parameterize the invariant functions with respect to rotations and reflections around the origin, we can use the
Fundamental Theorem of Invariant Theory for the orthogonal group. It says that $f:\mathbb R^{d\times n} \to \mathbb R$ is $O(d)$-invariant if and only if there exists a function $h:\mathcal S(n) \to \mathbb R$ where $\mathcal S(n)$ is the space of $n\times n$ symmetric matrices with real entries satisfying  $f(P) = h(PP^\top)$. 

The function $f$ is invariant with respect to permutations of the $n$ rows of $P$ if and only if $h$ is invariant with respect to the action of permutations by conjugation on $PP^\top=:X$. To be consistent with the notation from Section \ref{sec.formal} we say that $h$ is $H$-invariant if $h(\pi X \pi^\top)=h (X)$ for any $\pi$ in the symmetric group $S_n$. (I.e., $H$ refers to the group of linear transformations of $\mathcal S(n)$ induced by the conjugation action of $S_n$; it is abstractly isomorphic to $S_n$ but the notation $H$ also specifies the action.)

In \cite{blum2024galois} we implement the $H$-invariant functions $h:\mathcal S(n) \to \mathbb R$ using the result described in Section \ref{sec.formal}. We consider a bigger group $G:=S_n\times S_{n(n-1)/2}\supset H$ which acts in $\mathcal S(n)$ permuting the diagonal and off-diagonal entries of $X$ independently. We can easily construct SW-distinguishing polynomials for the action of $G$ using symmetric polynomials. Theorem \ref{thm:galois-for-ML} allows us to extend them to a set of generically SW-distinguishing polynomials for the action of $H$ by adding a polynomial $f^*$ that Galois-distinguishes $G$ from $H$. The polynomial $f^*$ is $H$-invariant but not fixed by any $H'$ satisfying $H\subsetneq H'\subset G$.

This result provides generically SW-distinguishing invariants for the action of permutations by conjugation on symmetric matrices. Restricting the results to point clouds requires a few extra technical steps for the following reasons: (1) the symmetric matrices arising as Gram matrices of point clouds themselves form a measure zero subset of the space of symmetric matrices, and (2) the group $G$ described in the previous paragraph does not act on the point clouds nor on the subset of symmetric matrices arising from point clouds; only $H$ does. These issues are handled by using Galois theory directly. Low-rank matrix completion techniques allow us to reduce the number of invariant features to $O(d\, n)$. This work was the context that originally led us to the train of thought described in Section~\ref{sec:main-idea}.

\subsection{Orbit recovery and field generation}

Another place where distinguishing and expressing have come together in data science is in a signal processing application known as {\em orbit recovery} or {\em (generalized) multi-reference alignment} \cite{abbe2017sample, perry2019sample, bandeira2020optimal, fan2021maximum, bendory2022sparse, abas2022generalized,  bendory2022dihedral, bandeira2017estimation, edidin2024orbit, edidin2024generic, bendory2025generalized}. One studies the reconstruction of a signal that has been corrupted both by noise and also by a transformation drawn randomly from a group. The corrupted signal is a random variable depending on the original signal. If the random transformation is drawn uniformly from the group, it destroys any information about where the signal lies in its group orbit, so the goal is to reconstruct the original signal's orbit, up to a small and controlled error, after witnessing many samples of the corrupted signal. A principal example is the mathematical study of {\em cryo-electron microscopy} \cite{sigworth2016principles, singer2018mathematics, bendory2020single}, a molecular imaging technique that creates many images of a molecule, each of which is both extremely noisy and also randomly oriented.

One of the findings of the literature on orbit recovery  gives  another manifestation of the {\em distinguish $\Leftrightarrow$ express} principle. It is shown in \cite{bandeira2020optimal, bandeira2017estimation} that, in the high-noise regime, the statistical sample complexity of the problem---in other words, the number of samples that need to be viewed for a successful approximation of the orbit to be information-theoretically possible---varies as $O(\sigma^{2d})$, where $\sigma$ is the noise level, and $d$ is the minimum degree required for the values of the group-invariant polynomials of up to that degree to uniquely identify the orbit. In other words, if (and only if) the values of the invariant polynomials of a certain degree $d$ can pin down (distinguish) the orbit of the signal information-theoretically, then this orbit can be accurately estimated (expressed) in terms of $O(\sigma^{2d})$ samples.

Because $d$ appears in the exponent, there is a strong incentive to work in regimes in which the $d$ in question can be made small. One typically gets a dramatic reduction in $d$ by working with generic rather than worst-case signals. For example, the original version of the multi-reference alignment problem is to estimate an element of $\RR^n$ to which Gaussian noise is added, and whose coordinates have also been subjected to a random cyclic shift. The implicit group action is thus the regular representation of $\ZZ/n\ZZ$. It is well-known in invariant theory that to pin down a worst-case orbit for this action requires the invariant polynomials up to degree $d=n$. On the other hand, a generic orbit (specifically, the orbit of any point whose discrete Fourier coefficients are all nonzero) can be pinned down with invariants of degree at most $d=3$. In other words, the invariants of degree $\leq 3$ are generically SW-distinguishing for $\ZZ/n\ZZ$, in the sense defined above.

It was shown by Kakarala \cite{kakarala2009completeness}, and by Smach et al \cite{smach2008generalized}, that this latter situation generalizes to the regular representation of any finite or compact Lie group (where in the latter case, regular representation means $L^2(G)$): there exists a set of invariants of degree $3$, known as the {\em bispectrum}, that uniquely identifies a generic orbit. Thus,  {\em distinguish $\Rightarrow$ express} happens in degree 3, where by {\em distinguish} we mean generically SW-distinguish, and {\em express} is in the sense of the orbit recovery problem: the orbit can be well-approximated using $O(\sigma^{2\cdot 3})$ samples.

Because of Rosenlicht's theorem, which asserts that generic SW-distinguishing is related to expressing in the sense of generating a field---these results led to the question of whether the polynomials of degree $\leq 3$ actually generate the field of rational invariants, at least if $G$ is finite (so that the regular representation is finite-dimensional). The bispectrum consists of functions that are polynomial over $\RR$ but not over $\CC$; since $\RR$ is not algebraically closed, Rosenlicht's theorem does not immediately imply a field generation result. So there was a real question.

The answer has turned out to be yes. For $G$ abelian, this was shown in \cite{bandeira2017estimation} by way of Galois theory: the invariants of degree $\leq 3$ were shown to be Galois-distinguishing for $G$, and field generation follows by the Fundamental Theorem, as discussed above. But this was dramatically generalized in \cite{edidin2024generic}. The technique was wholly different, but equally thematic from the present point of view. If $G$ is any finite group, the ground field is any infinite field (for example, $\QQ$, $\RR$, or any algebraically closed field), and the space of signals on which $G$ acts is, or even just {\em contains}, the regular representation, then \cite{edidin2024generic} showed that the polynomial invariants of degree at most $3$ are generically distinguishing for $G$.\footnote{Here {\em generically distinguishing} means that they distinguish all the orbits in a nonempty Zariski-open subset of signal space; when the ground field is $\RR$ or $\CC$, this is equivalent to them being generically SW-distinguishing in the sense defined above.} In characteristic zero (and in particular, over $\CC$), field generation follows by Rosenlicht's theorem.

\subsection{The number of (almost) distinguishing invariants}

In invariant theory, the main object of interest is the ring $k[\mathcal V]^G$ of invariant regular functions of a group $G$ acting on an algebraic variety $\mathcal V$, and the first step in taking a hold of it is to find a set of algebra generators. As discussed above, more relevant to data science applications (via the Stone--Weierstrass Theorem) is a set of orbit separators, and this is good because separating sets can be much smaller and easier to compute---see the remark at the end of Section~\ref{sec:main-idea}, and the below. If one is willing to jettison a small ``bad'' subset of $\mathcal V$, then smaller and easier to compute still may be a set of {\em generic} orbit separators. Here we discuss methods to compute small sets of separators and generic separators.

If the group $G$ is compact, the algebra generators are also orbit separators. While the number of algebra generators required may be large, in general no more than $2D+1$ orbit separators are needed, where $D$ is the dimension of the orbit space $\mathcal V/G$ \cite{dufresne2009separating}. If the group $G$ is finite, $D$ is just the dimension of $\mathcal V$, and if $G$ has positive dimension  then in general $D$ is lower still. However, this theorem is proven by starting from an arbitrary set of orbit separators (such as generators) and linearly combining them, using dimension-counting to show that a small number of such linear combinations remains separating. Thus, it does not provide a method to actually compute $2D + 1$ separators without first computing a larger separating set.

One method for tackling this challenge was provided in recent work of Dym and Gortler \cite{dym2025low}, which uses techniques developed for phase-retrieval \cite{balan2006signal}. 
They show that one can efficiently compute $2D+1$ separators by sampling randomly from a parametrized family of invariants fulfilling a certain criterion they call {\em strong separation}: this means that for any two elements of $\mathcal V$ in different orbits, the invariants of the parametrized family that fail to distinguish them are parametrized by a small (specifically, of positive codimension) subset in the parameter space. For some examples, it is possible to efficiently construct such strongly separating families without having prior access to a finite separating set. For example, Dym and Gortler show \cite[Proposition~2.1]{dym2025low} that for the action of the symmetric group $S_n$ on $n\times d$ matrices, the family of eminently computable invariants $X\mapsto \langle u, \operatorname{sort} (Xv) \rangle$, parametrized by $(u,v)\in \RR^n\times \RR^d$, is strongly separating.\footnote{A related approach is taken in recent work of Cahill, Iverson, Mixon, and Packer \cite{cahill2025group}, which also reduces the number of separators by one, if the group is finite and $\mathcal V = \RR^D$: this paper shows that, for any finite group, a set of $2D$ invariants selected randomly from a certain parametrized family of piecewise-linear maps on $\mathcal V$ is separating. The approach is further studied in \cite{mixon2023max, balan2023g, mixon2025injectivity}. These invariants have the added advantage of being numerically stable (in particular, the induced map on $\mathcal V/G$ is bilipschitz). They are not a priori efficient to compute if the group $G$ is large, but can be efficiently computed in certain special cases.}

Dym and Gortler's method can also be used to extract $D+1$ {\em generic} separators from a (generically) strongly separating family of functions as well. The $D+1$ bound is optimal in general: usually, $D+1$ invariants are required for generic separation, although in special cases $D$ may suffice.

Another approach to finding $D+1$ generic separators is via Rosenlicht's Theorem,  which identifies the problem with finding a generating set for the invariant field $k(\mathcal V)^G$. From this point of view, the bound $D+1$ can be viewed as a consequence of the {\em primitive element theorem} from field theory. The latter asserts that a finite, separable field extension can always be generated by a single element. It follows that a generating set for the field $k(\mathcal V)^G$ of rational invariants requires, at worst, the $D$ elements of a transcendence basis for $k(\mathcal V)^G$ over $k$, plus one additional element.

General methods for computing field generators for the rational invariants of actions of algebraic groups on varieties are given in \cite{muller1999calculating, hubert-kogan, kemper2007computation}, based on Gr\"obner basis methods. They do not achieve the optimal number $D+1$ of generators, and (due to the Gr\"obner bases) are not computationally efficient, but they are fully algorithmic and very flexible with respect to the group and the action. The method of Hubert and Kogan \cite{hubert-kogan} also provides an algorithm to express other invariants in terms of the generators. There are more efficient methods for finding field generators for specific types of group actions, such as \cite{hubert2012rational, hubert-labahn, gorlach2019rational, hubert2025algebraically}, and explicit generators are known in various special cases, some going back to the origins of Galois theory.

Through the lens of the Stone--Weierstrass theorem, the sets of algebra generators and the smaller separating sets of $2D+1$ elements have the same expressive power. However, this is a coarse claim that does not consider the (potentially different) approximation rates, which are generally not known. Even at this coarse level, though, sets of generic separators (e.g., field generators or other SW-distinguishing sets) have weaker expressive power. Not all generic separators have the same expressive power. For example, different generic separators may fail at different closed, zero-measure, $G$-stable sets. 

\subsection{Understanding expressivity limits for equivariant architectures}\label{sec:shubhendu}

%add some exposition on MLP's to be able to state Shubhendu's results

The basic example of a machine learning architecture is a {\em feedforward neural network}. The components that comprise this architecture are a data (input) space $\mathcal X = \RR^{n_0}$, an output space $\mathcal Y = \RR^{n_{s+1}}$, a number of ``hidden layers'' $L_1=\RR^{n_1},\dots,L_s=\RR^{n_s}$, and a ``pointwise nonlinearity'' $\sigma$, also called an ``activation function'', which is a fixed function $\RR\rightarrow \RR$ that gets applied to each entry of a vector. The number of hidden layers $s$ is the ``depth" of the network, and the maximum of their individual dimensions $n_1,\dots,n_s$ is the ``width". The family of maps $\mathcal X \rightarrow\mathcal Y$ parametrized by this architecture consists of compositions
\[
M_{s+1} \circ \sigma \circ M_s\circ\sigma \circ \dots\circ \sigma\circ M_1,
\]
where each $M_j$ is a linear or affine map $L_{j-1}\rightarrow L_j$ with $L_0=\mathcal X$ and $L_{s+1} = \mathcal Y$, and $\sigma$ is applied entrywise so as to make it a function $L_j\rightarrow L_j$ for each $j$. The parameters are the linear or affine maps $M_j$. If each $M_j$ is allowed to be a completely arbitrary affine map $L_{j-1}\rightarrow L_j$, then the network is said to be {\em fully connected}---and this designation is also used if the last map $M_{s+1}$ is constrained to be linear---but it is common in practice, and necessary for equivariant architectures, to constrain the available $M_j$'s further. A fully connected feedforward neural network is also called a {\em multilayer perceptron}, or {\em MLP}. A formulation of the classic universality theorem based on the Stone-Weierstrass theorem is that an MLP with a single hidden layer of unbounded width is universally approximating. More precisely \cite[Theorem~3.1]{pinkus1999approximation}:

\begin{theorem*}[Universal approximation for MLPs]
With the above notation, suppose $s=1$ (so there is one hidden layer), $n_2=1$ (so that $L_2=\mathcal Y = \RR$, i.e., the output is a single number), and $\sigma$ is any continuous function that is not a polynomial. Let $f:\mathcal X \rightarrow \mathcal Y=R$ be a continuous function we aim to approximate, $C\subset \mathcal X$ a compact set, and $\varepsilon > 0$ an error threshold. Then there exists a natural number $n_1$, an affine map $M_1:\mathcal X \rightarrow L_1 = \RR^{n_1}$, and a linear functional $M_2:L_1\rightarrow \mathcal{Y}=\RR$, so that the composition $M_2\circ\sigma\circ M_1$ uniformly $\varepsilon$-approximates $f$ on $C$.
\end{theorem*}

If $\mathcal{X}$ is equipped with an action by a group $G$, and we know an invariant ``feature extraction" map $\Phi:\mathcal X\rightarrow \RR^{n_0}$ that fully separates the orbits, then we can obtain a universally approximating invariant architecture by applying a universal architecture, such as an MLP with a single hidden layer of unbounded width, to the output of the feature extraction map (i.e., $M_2\circ\sigma\circ M_1\circ \Phi$). However, machine learning architectures are often implemented in a different way. In practice, invariant and equivariant machine learning architectures are commonly implemented as compositions of equivariant layers. In the case of a feedforward network, this looks as follows: the group (necessarily finite in this situation; sometimes a discretization of an infinite group) acts through permutations on $\mathcal X$, $\mathcal Y$, and all the hidden layers as well, and the individual $M_j$'s are constrained so that the maps $M_j:L_{j-1}\rightarrow L_j$ are equivariant. (The pointwise nonlinearity $\sigma$ is also equivariant because applying the same function to every entry in a vector commutes with permutations of the entries.) In the invariant case, the final step $M_{s+1}$ is an invariant pooling operator that makes final output invariant. Also, neither equivariant feedforward networks nor other architectures are actually universal in practical implementations, since they are defined using a fixed finite set of parameters. Illustrating the remark at the end of Section~\ref{sec:main-idea}, the general principle {\em distinguish $\Rightarrow$ express} is constrained by the way that these notions are operationalized through a given machine learning architecture; specific implementation choices may lead to a situation where distinguishing is insufficient for expressing. This topic has recently been explored in \cite{pacini2025universality,pacini2025universality2} for architectures based on equivariant layers. 

The work \cite{pacini2025universality} studies the question of how design choices such as width and depth of an equivariant feedforward network influence the ability of the network to translate separation power into expressivity. It gives examples of invariant models with identical distinguishing power but different approximation capabilities. In particular, the authors compare three architectures, all of which have $\RR^n$ as input space, a single hidden layer of unbounded width, a single number as output, and are arranged to parametrize only functions that are invariant with respect to the group $S_n$ of permutations of the coordinates of the input---but they differ in the way that this group acts on the hidden layer and/or in the constraints on $M_1$:
\begin{enumerate}
    \item The hidden layer is some number $h$ of copies of $\RR^n$, each receiving the canonical permutation representation of $S_n$. The map $M_1$ has components $M_{1,1},\dots,M_{1,h}$; each $M_{1,i}$, $i=1,\dots,h$, applies the same fixed affine map $x\mapsto a_ix+b_i$ to every entry of the input vector and sends it to the corresponding entry of the $i$th copy of $\RR^n$ in the hidden layer.\label{arch:1-conv}
    \item The hidden layer is again $h$ copies of $\RR^n$ each receiving the canonical permutation representation. This time, $M_1$ is allowed to be any equivariant affine map from the input layer to the hidden layer.\label{arch:pointnet}
    \item The hidden layer is $h$ copies of the regular representation of $S_n$, and $M_1$ is any equivariant affine map from the input layer to the hidden layer.\label{arch:regular}
\end{enumerate}
In all three cases, the map $M_2$ is allowed to be any invariant affine function on the hidden layer, and the parameter $h$ is allowed to be unboundedly large. The main finding of \cite{pacini2025universality} is that, while all three of these architectures are able to fully distinguish the orbits of $S_n$ on $\RR^n$ \cite[Proposition~9]{pacini2025universality}, the families of functions uniformly approximated by them have strict containments \ref{arch:1-conv}$\subsetneq$\ref{arch:pointnet}$\subsetneq$\ref{arch:regular} if $n\geq 3$ \cite[Proposition~16]{pacini2025universality}. Only the third architecture can approximate every $S_n$-invariant continuous function, i.e. it is the only one to witness the {\em distinguish $\Rightarrow$ express} principle; the authors say it is {\em separation-constrained universal}, while the first two are not. The method of proof of the strict containments involves a sufficient condition for expressivity failure in terms of differential operators, based on the theory of ridge functions.

The fact that architecture~\ref{arch:regular} does witness {\em distinguish $\Rightarrow$ express} (i.e., is separation-constrained universal, in the authors' language) is a manifestation of a general principle that a model with a single hidden layer where the group acts as sufficiently many copies of the regular representation, is universal \cite{ravanbakhsh2020universal}. This follows from the classical universality theorem mentioned above, because a hidden layer consisting of copies of the regular representation can implement taking the average over the group of a function modeled by a single-hidden-layer MLP; the classical result says the latter architecture can $\varepsilon$-approximate a desired invariant function, and then averaging enforces that the approximation is also invariant.

Loosely motivated by the equivariant setting, in which separation power {\em per se} is an inadequate framework to reason about approximation ability---indeed, if $\mathcal X =\mathcal Y$, any parametrized family of equivariant maps that includes the identity map automatically separates all orbits---the subsequent work \cite{pacini2025universality2} explores a different notion of distinguishing power and its relation to expressivity. Given a class of functions $\mathcal F$ on an input space $\mathcal X$, the {\em equivalence relation $\rho$ on $\mathcal X$ induced by $\mathcal F$} is defined by $x_1\sim_\rho x_2$ if $f(x_1)=f(x_2)$ for all $f\in \mathcal F$; i.e., $\rho$ captures the distinguishing power of the class $\mathcal F$. The authors work with a refinement of this notion: given a class of maps $\mathcal F$ from $\mathcal X $ to $\mathcal Y:=\RR^n$, they consider the tuple $\overline \rho =(\rho_1,\dots,\rho_n)$ of equivalence relations on $\mathcal X$ induced by each of the function classes $\{\pi_i\circ f:f\in \mathcal F\}$, where $\pi_1,\dots,\pi_n$ are a set of projections from $\mathcal{Y}$ to each of its coordinate functions. Then, given such a tuple $\overline \rho$ of equivalence relations, they consider the set $C_{\overline \rho}(\mathcal X,\mathcal Y)$ of all continuous maps $f$ such that $\pi_i\circ f$ respects $\rho_i$ for each $i=1,\dots,n$.  They say an architecture is {\em entrywise separation-constrained universal} if the function class $\mathcal F$ it parametrizes can approximate (uniformly on compact subsets of $\mathcal{X}$) any function in $\mathcal C_{\overline \rho}(\mathcal X,\mathcal Y)$. They then prove some results \cite[Theorems~2 and~3]{pacini2025universality2} that show that adding certain hidden layers can force architectures to be entrywise separation-constrained universal. This is another form of the {\em distinguish $\Rightarrow$ express} principle.

Regardless of whether the machine learning models are based on invariant features or equivariant layers, the  {\em distinguish $\Rightarrow$ express} condition requires that complexity (and therefore the number of parameters needed to implement such functions) is unbounded. This is not a realistic assumption and the expressivity limits under realistic computational constraints is an interesting research direction. Recent work \cite{siegel2026quantitative} provides quantitative approximation results, bounding the needed number of parameters the model requires to achieve a given error threshold, for certain equivariant architectures.

%https://arxiv.org/pdf/2511.07857 (maybe?) (maybe not so relevant)

\section{{\em Distinguish $\Rightarrow$ express} in linguistics}\label{sec:linguistics}

The version of the {\em distinguish $\Rightarrow$ express} principle that applies to natural (human) language reflects a foundational tenet, articulated by Saussure \cite{saussure2006writings}, that the basic unit of language is the \emph{linguistic sign}, defined as the unit that pairs a string of sounds\footnote{ More precisely, Saussure spoke of the {\em sound-image}, the psychological abstraction over individual pronunciations. He only discussed spoken languages, but the same principle applies to signed and written languages as well.} with a concept (or cluster of meaning components). We below use the term \emph{encode} to refer to the relationship between the string of sounds and the concept. As discussed here, the sign corresponds roughly to a word, and, according to one interpretation of Saussure's theory, has no fixed meaning, but has meaning only in contrast to that of other signs. In this strong version of the claim, a word has no inherent capacity to express except within its system of distinguishing.\footnote{Our primary method is to describe expressed distinctions in analogous systems from different languages and reveal the corresponding gaps in those languages. However, the same point can be made within a language: consider the meaning range of English {\em red} as it applies to wood, hair, and cabbage: its meaning (expressive range) exists in contrast to other possible hues within the respective domains.}

We do not consider the converse principle {\em  express $\Rightarrow$ distinguish} here, because it would have forced us to wade into considerations beyond our scope about the range of meaning of the word {\em express} in the context of linguistics.

We begin from the premise that everything expressible in language is expressible in all languages.\footnote{As this paragraph will make clear, this premise is an assertion about whole languages, rather than the individual systems of distinction within a language that interact to result in the expressive power of the language as a whole.} Languages differ in their systems of distinction, not in their capacity to express. Given a particular system of meaning contrasts, some languages provide more or fewer distinctions within that system than the corresponding system in other languages (and in general, fewer than is logically possible). 
This holds across all levels of language:  sounds (phonetics/phonology), word structures (morphology), word combinatorics (syntax); meaning (semantics/pragmatics). But in language, individual systems of distinction do not function in isolation from one another. If the expression of a certain concept depends on a certain distinction within some system of meaning contrasts in one language, but that distinction is unavailable inside the analogous system of meaning contrasts in another language, other systems in the latter language can be recruited for the purpose. We will see examples of this below. Our focus is on economies of definable systems of distinction, not the expressive limits of a language as a whole. 

In view of this, in order to demonstrate the interplay of distinction and expression, we will be looking at conventionalized systems of distinction within the constrained domain of word meaning. Our examples include: kin terms, which express biological and social aspects of relationships within a social/family unit (e.g., mother; father); personal pronouns; cardinal number systems; and color terms. Working within this narrow scope allows us to witness how the ability to express a concept emerges from a system of distinctions. Comparison of the systems of distinction drawn by kin terms in different languages reveals contrasts in what they can express.  Some of the examples also illustrate how expressive limitations in one system of distinctions can be overcome by recruiting other systems, as discussed above. The words play the role of functions in the Stone--Weierstrass theorem and field elements in the Galois theory context: they are the primitives out of which expression is built, and whose distinguishing power is harnessed to actualize that expression.\footnote{An additional feature of this analogy is that in order to express, the primitives (words) are combined according to a constrained set of combination rules (the language's syntax and morphology), just as the primitives in the Stone--Weierstrass theorem were combined according to the fixed set of combination rules $\{\RR\text{-linear combination},\times,\varepsilon\text{-approximation}\}$, and in the Galois context according to the fixed set $\{\RR\text{-linear combination},\times,\div\}$. However, we will not focus on linguistic combination rules in this discussion.}

We are interested for the sake of this discussion in particular instances of the Saussurean sign, namely, \emph{ irreducible words}. An irreducible word is a lexical sign whose meaning components cannot be aligned with subunits of the sound part of the sign, i.e., we do not include ``great-grandmother''-type words in our purview. Irreducibility is a property of a word as it is used contemporaneously, not a comment on its etymology: {\em y'all} functions as an irreducible word in English, despite having recognizable etymological components, as demonstrated by the fact that {\em y'all} can be used in contexts where the quantifier {\em all} cannot--such as when the word refers to a single person or two people.

While we hope the previous sections have convinced the reader that the interplay between distinction and expression is a theme connecting disparate areas in mathematics, in classical linguistic theorizing it is foundational. We give a few examples of how the members of a definable system within a language place contrastive pressure on one another and how expressivity emerges from that pressure within the system.

In the following examples we have simplified the descriptions to make unfamiliar systems of meaning distinction tractable for non-linguists.

\begin{example}[Kin terms]
 Kin term systems consist of signs that encode combinations of meaning components that express socially significant relationships between people. Kin terms may combine the individual property\footnote{The linguistic term would be {\em inherent property}, but this term does not enter into any debate about sex vs. gender or about the inherency of sex. The word {\em inherent} in this context just communicates that the sex/gender feature holds of the individual, regardless of any kin relationships. (The contrasting term is {\em relative}, not {\em extrinsic}. Taking a step back, this example is another illustration of Saussure's contention that meaning depends on the system of distinctions.) Throughout, we use ``\textsc{sex}'' to refer to this meaning component.} of \textsc{sex}\footnote{Here and below, we use the convention that components of meaning are rendered in small capital letters.} with properties relative to the reference person (=\textsc{ego}). The relational components of most kin term systems are:  \textsc{age}; \textsc{lineality} (older or younger generation); \textsc{collaterality} (sibling); \textsc{marital relation} (spouse/in-law); and \textsc{residence}. 
No such system includes more than a fraction of the logically possible distinctions \cite{nerlove1967sibling}. English, for example, has words for siblings that encode a distinguisher of sex, i.e., {\em brother} and {\em sister}, but not for the age of that sibling relative to  \textsc{ego}. However, in Javanese, siblings are differentiated by age relative to \textsc{ego}, and, among older siblings, also for sex; see Table~\ref{table:sibling-english-javanese}.  Whereas the English system fails to distinguish the sibling’s age relative to \textsc{ego} while distinguishing on sex, the Javanese system does the converse in the case of younger siblings. 

\begin{table}[h]
\centering
\renewcommand{\arraystretch}{1.5} % Adjust row height
\begin{tabular}{|c|c|c|c|c|c|}
    \hline
    \multicolumn{2}{|c|}{English} &  & \multicolumn{2}{c|}{Javanese} &  \\ \cline{1-2} \cline{4-5}
    \textsc{Male} & \textsc{Female} & & \textsc{Male} & \textsc{Female} & \\ \cline{1-2} \cline{4-6}
    \multirow{2}{*}{brother} & \multirow{2}{*}{sister} & & (kang)mas & mbak(yu) & \textsc{older than ego} \\ \cline{4-6}
    & & & \multicolumn{2}{c|}{adhik} & \textsc{younger than ego} \\ \hline
\end{tabular}
\caption{Sibling terms in English \& Javanese}
\label{table:sibling-english-javanese}
\end{table}

Indonesian uses a yet different system, including a distinct word, {\em besan}, for the relationship between a parent of one spouse and a parent of the other spouse. This word is more specific than the English cover term {\em in-law}, which can be used in reference to this relative, but which is also used for a much wider set of relatives without differentiation. That is, a sister-, cousin-, or other in-law is covered by this general term. In Indonesian, there is no term that covers all in-laws. These two contrasting systems show that any indirect marital relation is expressed by a single, general term in English, while no term of Indonesian is correspondingly underspecified. Indonesian makes distinctions that English ignores, and vice versa, within their kin term systems.

Again, we are interested here in the words that are irreducible, i.e., not ``great-grandmother''-type words. The set of Indonesian lineal kin terms provides another example, with irreducible words for relatives related generationally. This system, shown in Table~\ref{table:kin-indonesian}, contrasts with that of English, which includes words that encode both generationality and sex, but requires composition (i.e., the formation of reducible words) to reach more than a generation of removal from the ego in either direction. In Indonesian, only the two lineal positions immediately above the ego encode sex distinction, while non-compositional words encode as many as four generations ascending and descending, without sex distinction.\footnote{Note, in contrast to the English system, the relevant distinction in the Indonesian system may allow underspecification of the direction of removal from \textsc{ego}, encoding simply 3rd ({\em buyut}) or 4th ({\em canggah}) generation of removal.}

\begin{table}[h]
\centering
\renewcommand{\arraystretch}{1.5} % Adjust row height for better spacing
\begin{tabular}{|l|c|c|}
    \hline
    \multicolumn{1}{|c|}{} & \textsc{Male} & \textsc{Female} \\ \hline
    \textsc{4 generations ascending} [great-great-grandparent] & \multicolumn{2}{c|}{canggah} \\ \hline
    \textsc{3 generations ascending} [great-grandparent] & \multicolumn{2}{c|}{buyut} \\ \hline
    \textsc{2 generations ascending} [grandparent] & kakek & nenek \\ \hline
    \textsc{1 generation ascending} [parent] & bapak & ibu \\ \hline
    \textsc{ego} & \multicolumn{2}{c|}{} \\ \hline
    \textsc{1 generation descending} [child] & \multicolumn{2}{c|}{anak} \\ \hline
    \textsc{2 generations descending} [grandchild] & \multicolumn{2}{c|}{cucu} \\ \hline
    \textsc{3 generations descending} [great-grandchild] & \multicolumn{2}{c|}{cicit/(buyut)} \\ \hline
    \textsc{4 generations descending} [great-great-grandchild] & \multicolumn{2}{c|}{piut/(canggah)} \\ \hline
\end{tabular}
    \caption{Indonesian lineal kin terms}
    \label{table:kin-indonesian}
\end{table}

Thus, even within the domain of kin terms, we have three very different examples of the principle {\em distinguish $\Rightarrow$ express}. 
\end{example}

\begin{example}[Personal pronouns]
The next example is from pronoun systems. English has words for speaker/first person, hearer/2nd person, and other, with distinct forms for subject, object, and possessor. English distinguishes among masculine, feminine, and neuter only in the third person singular. Many personal pronoun systems in other languages provide more or different distinctions than English does. Tok Pisin, an English-based creole\footnote{A {\em creole} is a natural language that results from a pidgin language that speakers acquire natively. A {\em pidgin} is a simplified language variety that results from prolonged interaction between two or more language communities.} of Papua New Guinea, is one such language. Its first person plural pronouns include the component of {\em clusivity}: whether the hearer is included in or excluded from the pronoun’s reference. English lacks this distinction, which may lead to uncertainty, as shown in this example, which contains a compensation for this vagueness: \\

\noindent A: We’re supposed to finish up by noon.\\

\noindent B: You mean you and me or you and her?\\

\noindent A: You and me! \\

Such underspecification is impossible in Tok Pisin, whose two words for \textsc{first person plural} encode this expressive distinction.

In addition, rather than a two-way distinction between singular and plural as in English, Tok Pisin's pronoun system includes dual and trial forms, making a four-way distinction in grammatical number.\footnote{Verhaar \cite{verhaar1995toward} notes that both quadral ({\em yufopela}) and quintal ({\em yufaipela}) forms are attested, but rare. This has implications for the current discussion in that speakers may be able to productively apply a linguistic formula to increase the distinguishing power and thereby expressiveness.} So Tok Pisin both adds distinctions to the number dimension and adds the inclusive/exclusive distinction to the first person pronouns. We show these forms in Table~\ref{table:tok-pisin}.

\begin{table}[h]
\renewcommand{\arraystretch}{1.5} % Adjust row height
\centering
\begin{tabular}{|l|c|c|c|c|}
    \hline
    & \textsc{Singular} & \textsc{Dual} & \textsc{Trial} & \textsc{Plural} \\ \hline
    \textsc{First} & mi & & & \\ \hline
    \textsc{First Inclusive (1st \& 2nd)} & \multirow{2}{*}{} & yumitupela & yumitripela & yumi \\ \cline{1-1} \cline{3-5}
    \textsc{First Exclusive (1st \& not-2nd)} &  & mitupela & mitripela & mipela \\ \hline
    \textsc{Second} & yu & yutupela & yutripela & yupela \\ \hline
    \textsc{Third} & em & tupela & tripela & ol \\ \hline
\end{tabular}
    \caption{Tok Pisin personal pronouns \cite{verhaar1995toward}}
    \label{table:tok-pisin}
\end{table}

    As an English-based language having undergone creolization, Tok Pisin’s pronouns are, in their current use, irreducible, though historically they result from combining elements (just like the English word {\em y'all}, discussed above).  The components are derived from the English words {\em me} ({\em mi}), {\em you} ({\em yu}), {\em two} ({\em tu}), {\em three} ({\em tri}), {\em fellow} ({\em pela}), {\em him} ({\em em}), and {\em all} ({\em ol}). Relative to their English origins, some of the meaning components of the Tok Pisin words have undergone linguistic {\em abstraction}---that is, they have lost elements of their earlier meaning. For example, Tok Pisin {\em mi}  means \textsc{first person singular} whereas English {\em me} means \textsc{first person singular object} (as distinct from {\em I} [\textsc{subject}] or {\em my} [\textsc{possessive}]). By contrast, Tok Pisin {\em yumi} consists of two elements each of which has lost a distinguisher of the English etymon, and added the distinguisher of clusivity (a function borrowed from the indigenous contact languages of New Guinea) to create a single, more specific, unit, meaning ‘you, me, and others.’ It is specifically not a translation of {\em you and me}, which is correctly rendered by {\em yumitupela}.  The third person singular Tok Pisin pronoun {\em em} has abstracted away from \textsc{sex}, thereby losing a distinction of the English system.
\end{example}

\begin{example}[Number systems]
Some languages make only very few number-counting distinctions; as few as the distinction between \textsc{one} and \textsc{more than one}.\footnote{This is the same system of distinction found in English {\em grammatical} number, as in \textsc{singular} vs. \textsc{plural}. Some languages have richer systems of grammatical number.} Maybrat, a Papuan language of Indonesia, has a base-5 number system that expresses only the numbers one through five. When expressing higher values is required, a small subset of body part terms (e.g. ‘finger’) is redeployed (with attendant abstraction) as components of the counting system in combination with the number words. Combining elements of these two systems in a rule-governed way results in the conventional counting system shown in part in Table~\ref{table:maybrat}, allowing for the expression of additional numbers. This example illustrates the principle  discussed above, that systems of distinction in language do not work in isolation:  a lack of distinguishing power among the primitives in a given system of distinctions may be overcome by recruiting other systems of distinction and using the language's combination rules.\footnote{Traditionally, this counting system stopped around 80, with a switch to another language now used for higher numbers.}

\begin{table}[h]
    \centering
    \renewcommand{\arraystretch}{1.5}
    \begin{tabular}{|c|l|l|}
    \hline
    Numerical Gloss & Maybrat & English Gloss \\ \hline
    1  & sau         & one               \\ \hline
    2  & ewok        & two               \\ \hline
    3  & tuf         & three             \\ \hline
    4  & tiet        & four              \\ \hline
    5  & mat         & five              \\ \hline
    6  & krem sau    & finger one        \\ \hline
    7  & krem ewok   & finger two        \\ \hline
    8  & krem tuf    & finger three      \\ \hline
    9  & krem tiet   & finger four       \\ \hline
    10 & st-atem     & my-hand           \\ \hline
    11 & oo krem sau & foot finger one   \\ \hline
    12 & oo krem ewok & foot finger two  \\ \hline
\end{tabular}
    \caption{Maybrat number system \cite[pp.~108--110]{dol2007grammar}}
    \label{table:maybrat}
\end{table}
\end{example}

\begin{example}[Color term systems]
The English color term system comprises a set of words that empirically approaches the maximum number of distinctions attested among the world’s languages \cite{berlin1991basic, wals-133}; it happens that the distinctions are primarily based on hue. Other languages’ systems make fewer distinctions, and may make those distinctions based on other elements of color such as shade and saturation. The smallest number of members of a color term system is two, exemplified by Dagum Dani, a language of West Papua, Indonesia \cite{heider1972structure}.\footnote{Dagum Dani distinguishes light + warm (yellow/red) from dark + cool (blue/green) colors.} The example we present here is from Nafaanra, a member of the Niger-Congo language family spoken in Ghana and Ivory Coast. We have the unique opportunity to describe it in terms of two phases, separated by 40 years, which shows an evolution from a three-term system to a ten-term system; see Figure~\ref{fig:nafaanra-color}. The Nafaanra 1978 system distinguishes terms for \textsc{light}, \textsc{dark}, and \textsc{warm/red-like}, which does not take hue as a primary distinguisher \cite{zaslavsky2022evolution}. The Nafaanra 2018 system distinguishes seven categories in addition to the original three, which have accordingly shrunk in their expressive range---thus arriving at a similar number of irreducible color terms as in the English system, although the expressive ranges of the individual terms differ from those in English. As shown in the figure, a meaning component, \textsc{hue}, which was not primarily significant in the 1978 system, has become a primary distinguisher in the 2018 system. In essence, Nafaanra has added a dimension of distinction that was not part of the original expressive system. This additional dimension of distinction is likely the result of contact between Nafaanra and Twi and English, both of which have hue-based color term systems that are more differentiated than Nafaanra had in 1978.\footnote {As \cite{zaslavsky2022evolution} note, the Nafaanra system is not a result of borrowing of either the terms or their meaning ranges wholesale from either English or Twi.} 

\begin{figure}[h]
\begin{center}
\includegraphics[width=0.5\textwidth]{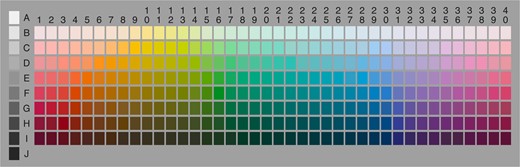}

(chip chart)
\end{center}

\includegraphics[width=\textwidth]{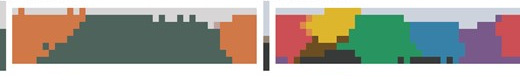}

\hspace{0.25\textwidth}(a) \hspace{0.45\textwidth}(b) 
    \caption{Figures from \cite{zaslavsky2022evolution} depicting the Nafaanra color naming system in 1978 (a) and in 2018 (b) for the colors in the chip chart from the World Color Survey (WCS) used as a stimulus grid. (a) The 1978 system: {\em fi\textipa{N}ge} `light', {\em w\textipa{OO}} `dark', and {\em nyi\textipa{E}} `warm or red-like.' (b) The 2018 system: the three terms from 1978 have smaller expressive ranges, and new terms have emerged---{\em wr\textipa{E}nyi\textipa{N}ge} `green', {\em lomru} `orange', {\em \textipa{N}gonyina} `yellow-orange', {\em mbruku} `blue', {\em poto} `purple', {\em wr\textipa{E}waa} `brown', and {\em t\textipa{OO}nr\textipa{O}} `gray'. }
    \label{fig:nafaanra-color}
\end{figure}

In its evolution, the 2018 system redeployed terms from other systems. The emerging system has borrowed some words from other languages; for example, Nafaanra {\em mbruku} may have its origin in English {\em blue}.   In other cases, the emerging system has redeployed words from other Nafaanra domains, such as {\em \textipa{N}gonyina} ‘yellow-orange’ which comes from the Nafaanra word for chicken fat. The language has repurposed terms from other systems in order to increase the number of distinctions within the color term system. (This is another illustration of the critical interaction of systems of distinction in language mentioned at the start of the section.) Because it makes both more distinctions and distinctions based on other features, the 2018 system has more precise expressive capacity than the 1978 system had.

\end{example}

\section{Summary}\label{sec:discussion}
This article discusses a connection between the Fundamental Theorem of Galois Theory and the Stone--Weierstrass theorem. Intuitively, both theorems state a correspondence between \emph{distinguishing} and \emph{expressing}. Here, we mathematically formalize these concepts and how they relate. We also describe  applied contexts in which these ideas appear in machine learning and data science. Finally, we illustrate this principle through examples in linguistics. 

\bibliographystyle{alpha}

\bibliography{sample}

@article{blum2024galois,
  title={Functions on Symmetric Matrices and Point Clouds via Lightweight Invariant Features from {G}alois {T}heory},
  author={Blum-Smith, Ben and Huang, Ningyuan and Cuturi, Marco and Villar, Soledad},
  journal={SIAM Journal on Applied Algebra and Geometry},
  volume={9},
  number={4},
  pages={902--938},
  year={2025},
  publisher={SIAM}
}

@article{cahill2025group,
  title={Group-invariant max filtering},
  author={Cahill, Jameson and Iverson, Joseph W and Mixon, Dustin G and Packer, Daniel},
  journal={Foundations of Computational Mathematics},
  volume={25},
  number={3},
  pages={1047--1084},
  year={2025},
  publisher={Springer}
}

@article{mixon2023max,
  title={Max filtering with reflection groups},
  author={Mixon, Dustin G and Packer, Daniel},
  journal={Advances in Computational Mathematics},
  volume={49},
  number={6},
  pages={82},
  year={2023},
  publisher={Springer}
}

@article{mixon2025injectivity,
  title={Injectivity, stability, and positive definiteness of max filtering},
  author={Mixon, Dustin G and Qaddura, Yousef},
  journal={Constructive Approximation},
  pages={1--38},
  year={2025},
  publisher={Springer}
}

@article{balan2023g,
  title={G-invariant representations using coorbits: Bi-lipschitz properties},
  author={Balan, Radu and Tsoukanis, Efstratios},
  journal={arXiv preprint arXiv:2308.11784},
  year={2023}
}

@inproceedings{maron2019universality,
  title={On the universality of invariant networks},
  author={Maron, Haggai and Fetaya, Ethan and Segol, Nimrod and Lipman, Yaron},
  booktitle={International conference on machine learning},
  pages={4363--4371},
  year={2019},
  organization={PMLR}
}

@inproceedings{chen2019equivalence,
  title={On the equivalence between graph isomorphism testing and function approximation with GNNs},
  author={Chen, Zhengdao and Villar, Soledad and Chen, Lei and Bruna, Joan},
  booktitle={Proceedings of the 33rd International Conference on Neural Information Processing Systems},
  pages={15894--15902},
  year={2019}
}

@article{keriven2019universal,
  title={Universal invariant and equivariant graph neural networks},
  author={Keriven, Nicolas and Peyr{\'e}, Gabriel},
  journal={Advances in Neural Information Processing Systems},
  volume={32},
  year={2019}
}

@inproceedings{dymuniversality,
  title={On the Universality of Rotation Equivariant Point Cloud Networks},
  author={Dym, Nadav and Maron, Haggai},
  booktitle={International Conference on Learning Representations},
year={2021}
}

@article{yarotsky2022universal,
  title={Universal approximations of invariant maps by neural networks},
  author={Yarotsky, Dmitry},
  journal={Constructive Approximation},
  volume={55},
  number={1},
  pages={407--474},
  year={2022},
  publisher={Springer}
}

@article{villartowards,
  title={Towards fully covariant machine learning},
  author={Villar, Soledad and Hogg, David W and Yao, Weichi and Kevrekidis, George A and Sch{\"o}lkopf, Bernhard},
  journal={Transactions on Machine Learning Research},
year={2024}
}

@article{morris2023weisfeiler,
  title={Weisfeiler and leman go machine learning: The story so far},
  author={Morris, Christopher and Lipman, Yaron and Maron, Haggai and Rieck, Bastian and Kriege, Nils M and Grohe, Martin and Fey, Matthias and Borgwardt, Karsten},
  journal={Journal of Machine Learning Research},
  volume={24},
  number={333},
  pages={1--59},
  year={2023}
}

@article{bartlett2020benign,
  title={Benign overfitting in linear regression},
  author={Bartlett, Peter L and Long, Philip M and Lugosi, G{\'a}bor and Tsigler, Alexander},
  journal={Proceedings of the National Academy of Sciences},
  volume={117},
  number={48},
  pages={30063--30070},
  year={2020},
  publisher={National Academy of Sciences}
}

@article{geman1992neural,
  title={Neural networks and the bias/variance dilemma},
  author={Geman, Stuart and Bienenstock, Elie and Doursat, Ren{\'e}},
  journal={Neural computation},
  volume={4},
  number={1},
  pages={1--58},
  year={1992},
  publisher={MIT Press One Rogers Street, Cambridge, MA 02142-1209, USA journals-info~…}
}

@inproceedings{puny2021frame,
  title={Frame Averaging for Invariant and Equivariant Network Design},
  author={Puny, Omri and Atzmon, Matan and Smith, Edward J and Misra, Ishan and Grover, Aditya and Ben-Hamu, Heli and Lipman, Yaron},
  booktitle={International Conference on Learning Representations},
  year={2021}
}

@article{berkson1944application,
  title={Application of the logistic function to bio-assay},
  author={Berkson, Joseph},
  journal={Journal of the American statistical association},
  volume={39},
  number={227},
  pages={357--365},
  year={1944},
  publisher={Taylor \& Francis}
}

@inproceedings{babai2016graph,
  title={Graph isomorphism in quasipolynomial time},
  author={Babai, L{\'a}szl{\'o}},
  booktitle={Proceedings of the forty-eighth annual ACM symposium on Theory of Computing},
  pages={684--697},
  year={2016}
}

@article{boker2023fine,
  title={Fine-grained expressivity of graph neural networks},
  author={B{\"o}ker, Jan and Levie, Ron and Huang, Ningyuan and Villar, Soledad and Morris, Christopher},
  journal={Advances in Neural Information Processing Systems},
  volume={36},
  pages={46658--46700},
  year={2023}
}

@inproceedings{kaba2023equivariance,
  title={Equivariance with learned canonicalization functions},
  author={Kaba, S{\'e}kou-Oumar and Mondal, Arnab Kumar and Zhang, Yan and Bengio, Yoshua and Ravanbakhsh, Siamak},
  booktitle={International Conference on Machine Learning},
  pages={15546--15566},
  year={2023},
  organization={PMLR}
}

@techreport{shapenet2015,
  title       = {{ShapeNet: An Information-Rich 3D Model Repository}},
  author      = {Chang, Angel X. and Funkhouser, Thomas and Guibas, Leonidas and Hanrahan, Pat and Huang, Qixing and Li, Zimo and Savarese, Silvio and Savva, Manolis and Song, Shuran and Su, Hao and Xiao, Jianxiong and Yi, Li and Yu, Fisher},
  number      = {arXiv:1512.03012 [cs.GR]},
  institution = {Stanford University --- Princeton University --- Toyota Technological Institute at Chicago},
  year        = {2015}
}

@phdthesis{dufresne2008separating,
  title={Separating Invariants},
  author={Dufresne, Emilie},
  year={2008},
  school={Queen’s University}
}

@article{hubert2025algebraically,
  title={Algebraically independent generators for the invariant field of $\mathrm{SO}_3(\mathbb{R})$ and $\mathrm{O}_3(\mathbb{R})$ representations $\mathbb{R}^3 \oplus \mathcal{H}$},
  author={Hubert, Evelyne and Jalard, Martin},
  year={2025}
}

@article{balan2006signal,
  title={On signal reconstruction without phase},
  author={Balan, Radu and Casazza, Pete and Edidin, Dan},
  journal={Applied and Computational Harmonic Analysis},
  volume={20},
  number={3},
  pages={345--356},
  year={2006},
  publisher={Elsevier}
}

@article{dym2025low,
  title={Low-dimensional invariant embeddings for universal geometric learning},
  author={Dym, Nadav and Gortler, Steven J},
  journal={Foundations of Computational Mathematics},
  volume={25},
  number={2},
  pages={375--415},
  year={2025},
  publisher={Springer}
}

@article{gorlach2019rational,
  title={Rational invariants of even ternary forms under the orthogonal group},
  author={G{\"o}rlach, Paul and Hubert, Evelyne and Papadopoulo, Th{\'e}o},
  journal={Foundations of Computational Mathematics},
  volume={19},
  number={6},
  pages={1315--1361},
  year={2019},
  publisher={Springer}
}

@inproceedings{muller1999calculating,
  title={Calculating generators for invariant fields of linear algebraic groups},
  author={M{\"u}ller—Quade, J{\"o}rn and Beth, Thomas},
  booktitle={International Symposium on Applied Algebra, Algebraic Algorithms, and Error-Correcting Codes},
  pages={392--403},
  year={1999},
  organization={Springer}
}

@article{kemper2007computation,
  title={The computation of invariant fields and a constructive version of a theorem by Rosenlicht},
  author={Kemper, Gregor},
  journal={Transformation Groups},
  volume={12},
  number={4},
  pages={657--670},
  year={2007},
  publisher={Springer}
}

@article{fels1998moving,
  title={Moving coframes: I. A practical algorithm},
  author={Fels, Mark and Olver, Peter J},
  journal={Acta Applicandae Mathematica},
  volume={51},
  number={2},
  pages={161--213},
  year={1998},
  publisher={Springer}
}

@article{fels1999moving,
  title={Moving coframes: II. Regularization and theoretical foundations},
  author={Fels, Mark and Olver, Peter J},
  journal={Acta Applicandae Mathematica},
  volume={55},
  number={2},
  pages={127--208},
  year={1999},
  publisher={Springer}
}

@inproceedings{fels2001moving,
  title={Moving frames and coframes},
  author={Fels, Mark and Olver, Peter J},
  booktitle={Algebraic Methods in Physics: A Symposium for the 60th Birthdays of Ji{\v{r}}{\'\i} Patera and Pavel Winternitz},
  pages={47--64},
  year={2001},
  organization={Springer}
}

@article{olver2003moving,
  title={Moving frames},
  author={Olver, Peter J},
  journal={Journal of Symbolic Computation},
  volume={36},
  number={3-4},
  pages={501--512},
  year={2003},
  publisher={Elsevier}
}

@article{duvenaud2015convolutional,
  title={Convolutional networks on graphs for learning molecular fingerprints},
  author={Duvenaud, David K and Maclaurin, Dougal and Iparraguirre, Jorge and Bombarell, Rafael and Hirzel, Timothy and Aspuru-Guzik, Al{\'a}n and Adams, Ryan P},
  journal={Advances in neural information processing systems},
  volume={28},
  year={2015}
}

@article{cohen2021equivariant,
  title={Equivariant convolutional networks},
  author={Cohen, Taco},
  journal={University of Amsterdam PhD thesis},
  year={2021}
}

@article{grothendieck1960elements,
  title={{\'E}l{\'e}ments de g{\'e}om{\'e}trie alg{\'e}brique: I. Le langage des sch{\'e}mas},
  author={Grothendieck, Alexander},
  journal={Publications Math{\'e}matiques de l'IH{\'E}S},
  volume={4},
  pages={5--228},
  year={1960}
}

@article{hornik1989multilayer,
  title={Multilayer feedforward networks are universal approximators},
  author={Hornik, Kurt and Stinchcombe, Maxwell and White, Halbert},
  journal={Neural networks},
  volume={2},
  number={5},
  pages={359--366},
  year={1989},
  publisher={Elsevier}
}

@article{pinkus1999approximation,
  title={Approximation theory of the MLP model in neural networks},
  author={Pinkus, Allan},
  journal={Acta numerica},
  volume={8},
  pages={143--195},
  year={1999},
  publisher={Cambridge University Press}
}

@book{weil1946foundations,
  title={Foundations of algebraic geometry},
  author={Weil, Andr\'e},
  volume={29},
  year={1946},
  publisher={American Mathematical Soc.}
}

@article{elmer2012separating,
  title={Separating invariants for the basic $\mathbb{G}_a$-actions},
  author={Elmer, Jonathan and Kohls, Martin},
  journal={Proceedings of the American Mathematical Society},
  volume={140},
  number={1},
  pages={135--146},
  year={2012}
}

@article{abas2022generalized,
  title={The generalized method of moments for multi-reference alignment},
  author={Abas, Asaf and Bendory, Tamir and Sharon, Nir},
  journal={IEEE Transactions on Signal Processing},
  volume={70},
  pages={1377--1388},
  year={2022},
  publisher={IEEE}
}

@inproceedings{abbe2017sample,
  title={Sample complexity of the boolean multireference alignment problem},
  author={Abbe, Emmanuel and Pereira, Jo{\~a}o M and Singer, Amit},
  booktitle={2017 IEEE International Symposium on Information Theory (ISIT)},
  pages={1316--1320},
  year={2017},
  organization={IEEE}
}

@article{bandeira2017estimation,
  title={Estimation under group actions: recovering orbits from invariants},
  author={Bandeira, Afonso S and Blum-Smith, Ben and Kileel, Joe and Perry, Amelia and Niles-Weed, Jonathan and Wein, Alexander S},
  journal={Applied and Computational Harmonic Analysis},
  year={2023},
  volume={66},
  pages={236-319},
  url={https://doi.org/10.1016/j.acha.2023.06.001}
}

@article{bandeira2020optimal,
  title={Optimal rates of estimation for multi-reference alignment},
  author={Bandeira, Afonso S and Niles-Weed, Jonathan and Rigollet, Philippe},
  journal={Mathematical Statistics and Learning},
  volume={2},
  number={1},
  pages={25--75},
  year={2020}
}

@article{bendory2020single,
  title={Single-particle cryo-electron microscopy: Mathematical theory, computational challenges, and opportunities},
  author={Bendory, Tamir and Bartesaghi, Alberto and Singer, Amit},
  journal={IEEE signal processing magazine},
  volume={37},
  number={2},
  pages={58--76},
  year={2020},
  publisher={IEEE}
}

@article{bendory2022dihedral,
  title={Dihedral multi-reference alignment},
  author={Bendory, Tamir and Edidin, Dan and Leeb, William and Sharon, Nir},
  journal={IEEE Transactions on Information Theory},
  volume={68},
  number={5},
  pages={3489--3499},
  year={2022},
  publisher={IEEE}
}

@inproceedings{bendory2022sparse,
  title={Sparse multi-reference alignment: Sample complexity and computational hardness},
  author={Bendory, Tamir and Michelin, Oscar and Singer, Amit},
  booktitle={ICASSP 2022-2022 IEEE International Conference on Acoustics, Speech and Signal Processing (ICASSP)},
  pages={8977--8981},
  year={2022},
  organization={IEEE}
}

@article{bendory2025generalized,
  title={The generalized phase retrieval problem over compact groups},
  author={Bendory, Tamir and Edidin, Dan},
  journal={arXiv preprint arXiv:2501.03549},
  year={2025}
}

@book{derksen-kemper,
  title={Computational invariant theory},
  author={Derksen, Harm and Kemper, Gregor},
  year={2015},
  publisher={Springer},
}

@article{domokos2007typical,
  title={Typical separating invariants},
  author={Domokos, M{\'a}ty{\'a}s},
  journal={Transformation Groups},
  volume={12},
  number={1},
  pages={49--63},
  year={2007},
  publisher={Springer}
}

@article{domokos,
  title={Degree bound for separating invariants of abelian groups},
  author={Domokos, M{\'a}ty{\'a}s},
  journal={Proceedings of the American Mathematical Society},
  volume={145},
  number={9},
  pages={3695--3708},
  year={2017}
}

@article{domokos2022separating,
  title={Separating monomials for diagonalizable actions},
  author={Domokos, M{\'a}ty{\'a}s},
  journal={Bulletin of the London Mathematical Society},
  year={2022},
  publisher={Wiley Online Library}
}

@article{domokos2024separating,
  title={The separating Noether number of small groups},
  author={Domokos, M{\'a}ty{\'a}s and Schefler, Barna},
  journal={arXiv preprint arXiv:2412.08621},
  year={2024}
}

@article{draisma2008polarization,
  title={Polarization of separating invariants},
  author={Draisma, Jan and Kemper, Gregor and Wehlau, David},
  journal={Canadian Journal of Mathematics},
  volume={60},
  number={3},
  pages={556--571},
  year={2008},
  publisher={Cambridge University Press}
}

@article{dufresne2009separating,
  title={Separating invariants and finite reflection groups},
  author={Dufresne, Emilie},
  journal={Advances in Mathematics},
  volume={221},
  number={6},
  pages={1979--1989},
  year={2009},
  publisher={Elsevier}
}

@article{dufresne2009cohen,
  title={The {C}ohen--{M}acaulay property of separating invariants of finite groups},
  author={Dufresne, Emilie and Elmer, Jonathan and Kohls, Martin},
  journal={Transformation groups},
  volume={14},
  number={4},
  pages={771--785},
  year={2009},
  publisher={Springer}
}

@article{dufresne2013finite,
  title={Finite separating sets and quasi-affine quotients},
  author={Dufresne, Emilie},
  journal={Journal of Pure and Applied Algebra},
  volume={217},
  number={2},
  pages={247--253},
  year={2013},
  publisher={Elsevier}
}

@article{dufresne2015separating,
  title={Separating invariants and local cohomology},
  author={Dufresne, Emilie and Jeffries, Jack},
  journal={Advances in Mathematics},
  volume={270},
  pages={565--581},
  year={2015},
  publisher={Elsevier}
}

@article{edidin2024orbit,
  title={Orbit recovery for band-limited functions},
  author={Edidin, Dan and Satriano, Matthew},
  journal={SIAM Journal on Applied Algebra and Geometry},
  volume={8},
  number={3},
  pages={733--755},
  year={2024},
  publisher={SIAM}
}

@article{edidin2024generic,
  title={Orbit recovery from invariants of low degree in
representations of finite groups},
  author={Edidin, Dan and Katz, Josh},
  journal={arXiv preprint arXiv:2503.00009},
  year={2025}
}

@article{fan2021maximum,
  title={Maximum likelihood for high-noise group orbit estimation and single-particle cryo-{EM}},
  author={Fan, Zhou and Lederman, Roy R and Sun, Yi and Wang, Tianhao and Xu, Sheng},
  journal={arXiv preprint arXiv:2107.01305},
  year={2021}
}

@article{hubert-kogan,
  title={Rational invariants of a group action. {C}onstruction and rewriting},
  author={Hubert, Evelyne and Kogan, Irina A},
  journal={Journal of Symbolic Computation},
  volume={42},
  number={1-2},
  pages={203--217},
  year={2007},
  publisher={Elsevier}
}

@article{hubert-labahn,
  title={Computation of invariants of finite abelian groups},
  author={Hubert, Evelyne and Labahn, George},
  journal={Mathematics of Computation},
  volume={85},
  number={302},
  pages={3029--3050},
  year={2016}
}

@inproceedings{hubert2012rational,
  title={Rational invariants of scalings from {H}ermite normal forms},
  author={Hubert, Evelyne and Labahn, George},
  booktitle={Proceedings of the 37th International Symposium on Symbolic and Algebraic Computation},
  pages={219--226},
  year={2012}
}

@article{kakarala2009completeness,
  title={Completeness of bispectrum on compact groups},
  author={Kakarala, Ramakrishna},
  journal={arXiv preprint arXiv:0902.0196},
  year={2009}
}

@article{kemper2009separating,
  title={Separating invariants},
  author={Kemper, Gregor},
  journal={Journal of Symbolic Computation},
  volume={44},
  number={9},
  pages={1212--1222},
  year={2009},
  publisher={Elsevier}
}

@article{kemper2022separating,
  title={Separating invariants over finite fields},
  author={Kemper, Gregor and Lopatin, Artem and Reimers, Fabian},
  journal={Journal of Pure and Applied Algebra},
  volume={226},
  number={4},
  pages={106904},
  year={2022},
  publisher={Elsevier}
}

@article{kohls-kraft,
  title={Degree bounds for separating invariants},
  author={Kohls, Martin and Kraft, Hanspeter},
  journal={Mathematical Research Letters},
  volume={17},
  number={6},
  pages={1171--1182},
  year={2010}
}

@article{kohls2013separating,
  title={Separating invariants for the Klein four group and cyclic groups},
  author={Kohls, Martin and Sezer, M{\"u}fit},
  journal={International Journal of Mathematics},
  volume={24},
  number={06},
  pages={1350046},
  year={2013},
  publisher={World Scientific}
}

@article{lopatin2021separating,
  title={Separating invariants for multisymmetric polynomials},
  author={Lopatin, Artem and Reimers, Fabian},
  journal={Proceedings of the American Mathematical Society},
  volume={149},
  number={2},
  pages={497--508},
  year={2021}
}

@article{perry2019sample,
  title={The sample complexity of multireference alignment},
  author={Perry, Amelia and Weed, Jonathan and Bandeira, Afonso S and Rigollet, Philippe and Singer, Amit},
  journal={SIAM Journal on Mathematics of Data Science},
  volume={1},
  number={3},
  pages={497--517},
  year={2019},
  publisher={SIAM}
}

@article{reimers2018separating,
  title={Separating invariants of finite groups},
  author={Reimers, Fabian},
  journal={Journal of Algebra},
  volume={507},
  pages={19--46},
  year={2018},
  publisher={Elsevier}
}

@article{reimers2020separating,
  title={Separating invariants for two copies of the natural Sn-action},
  author={Reimers, Fabian},
  journal={Communications in Algebra},
  volume={48},
  number={4},
  pages={1584--1590},
  year={2020},
  publisher={Taylor \& Francis}
}

@article{rosenlicht1956some,
  title={Some basic theorems on algebraic groups},
  author={Rosenlicht, Maxwell},
  journal={American Journal of Mathematics},
  volume={78},
  number={2},
  pages={401--443},
  year={1956},
  publisher={JSTOR}
}

@article{schefler2025separating,
  title={The separating Noether number of abelian groups of rank two},
  author={Schefler, Barna},
  journal={Journal of Combinatorial Theory, Series A},
  volume={209},
  pages={105951},
  year={2025},
  publisher={Elsevier}
}

@article{sezer2009constructing,
  title={Constructing modular separating invariants},
  author={Sezer, M{\"u}fit},
  journal={Journal of Algebra},
  volume={322},
  number={11},
  pages={4099--4104},
  year={2009},
  publisher={Elsevier}
}

@article{sigworth2016principles,
  title={Principles of cryo-{EM} single-particle image processing},
  author={Sigworth, Fred J},
  journal={Microscopy},
  volume={65},
  number={1},
  pages={57--67},
  year={2016},
  publisher={Oxford University Press}
}

@inproceedings{singer2018mathematics,
  title={Mathematics for cryo-electron microscopy},
  author={Singer, Amit},
  booktitle={Proceedings of the International Congress of Mathematicians: Rio de Janeiro 2018},
  pages={3995--4014},
  year={2018},
  organization={World Scientific}
}

@article{smach2008generalized,
  title={Generalized Fourier descriptors with applications to objects recognition in SVM context},
  author={Smach, Fethi and Lema{\^\i}tre, Cedric and Gauthier, Jean-Paul and Miteran, Johel and Atri, Mohamed},
  journal={Journal of mathematical imaging and Vision},
  volume={30},
  pages={43--71},
  year={2008},
  publisher={Springer}
}

@book{dol2007grammar,
  title={A grammar of Maybrat: a language of the Bird's Head Peninsula, Papua Province, Indonesia},
  author={Dol, Philomena},
  year={2007},
  publisher={Pacific Linguistics},
  volume={586}
}

@article{heider1972structure,
  title={The structure of the color space in naming and memory for two languages},
  author={Heider, Eleanor Rosch and Olivier, Donald C},
  journal={Cognitive psychology},
  volume={3},
  number={2},
  pages={337--354},
  year={1972},
  publisher={Elsevier}
}

@book{saussure2006writings,
  title={Writings in general linguistics},
  author={De Saussure, Ferdinand},
  year={2006[1916]},
  publisher={Oxford University Press}
}

@book{verhaar1995toward,
  title={Toward a reference grammar of Tok Pisin: An experiment in corpus linguistics},
  author={Verhaar, John WM},
  volume={26},
  year={1995},
  publisher={University of Hawaii Press}
}

@article{siegel2026quantitative,
  title={Quantitative Approximation Rates for Group Equivariant Learning},
  author={Siegel, Jonathan W and Hordan, Snir and Lawrence, Hannah and Syed, Ali and Dym, Nadav},
  journal={arXiv preprint arXiv:2602.20370},
  year={2026}
}

@article{pacini2025universality,
  title={On Universality Classes of Equivariant Networks},
  author={Pacini, Marco and Santin, Gabriele and Lepri, Bruno and Trivedi, Shubhendu},
  journal={arXiv preprint arXiv:2506.02293},
  year={2025}
}

@article{pacini2025universality2,
  title={On universality of deep equivariant networks},
  author={Pacini, Marco and Petrache, Mircea and Lepri, Bruno and Trivedi, Shubhendu and Walters, Robin},
  journal={arXiv preprint arXiv:2510.15814},
  year={2025}
}

@inproceedings{ravanbakhsh2020universal,
  title={Universal equivariant multilayer perceptrons},
  author={Ravanbakhsh, Siamak},
  booktitle={International Conference on Machine Learning},
  pages={7996--8006},
  year={2020},
  organization={PMLR}
}

@article{zaslavsky2022evolution,
  title={The evolution of color naming reflects pressure for efficiency: Evidence from the recent past},
  author={Zaslavsky, Noga and Garvin, Karee and Kemp, Charles and Tishby, Naftali and Regier, Terry},
  journal={Journal of Language Evolution},
  volume={7},
  number={2},
  pages={184--199},
  year={2022},
  publisher={Oxford University Press}
}

@article{nerlove1967sibling,
  title={Sibling Terminology and Cross-Sex Behavior 1},
  author={Nerlove, Sara and Romney, A Kimball},
  journal={American Anthropologist},
  volume={69},
  number={2},
  pages={179--187},
  year={1967},
  publisher={Wiley Online Library}
}

@book{royden,
  title={Real Analysis},
  author={Royden, H. L.},
  year={1988},
  publisher={Prentice-Hall},
  edition={Third}
}

@book{jacobson,
  title={Basic Algebra I},
  author={Jacobson, Nathan},
  year={2009},
  publisher={Dover},
  edition={Second},
}

@incollection{wals-133,
  author    = {Kay, Paul and Maffi, Luisa},
  booktitle = {The World Atlas of Language Structures Online},
  editor    = {Dryer, Matthew S. and Haspelmath, Martin},
  publisher = {Zenodo},
  title     = {Number of Basic Colour Categories (v2020.4)},
  type      = {Data set},
  url       = {https://doi.org/10.5281/zenodo.13950591},
  year      = {2013},
  doi       = {10.5281/zenodo.13950591}
}

@book{berlin1991basic,
  title={Basic color terms: Their universality and evolution},
  author={Berlin, Brent and Kay, Paul},
  year={1991},
  publisher={University of California Press}
}

\end{document}